\newtheorem{case}{Case}
\newtheoremstyle{claim}
     {11pt}
     {11pt}
     {}
     {}
     {\itshape}
     {}
     {.5em}
     {\noindent\thmname{#1} \thmnumber{#2}.{\rm\thmnote{#3}}}
\theoremstyle{claim}
\newtheorem{claim}{Claim}
\newtheoremstyle{loneclaim}
     {11pt}
     {11pt}
     {}
     {}
     {\itshape}
     {}
     {.5em}
     {\noindent\thmname{#1}.}
\theoremstyle{loneclaim}
\newtheorem*{claim*}{Claim}
\newtheoremstyle{theorem}
     {11pt}
     {11pt}
     {}
     {}
     {\bfseries}
     {}
     {.5em}
     {\noindent\thmnumber{#2}. \thmname{#1}{\rm\thmnote{#3}}}
\theoremstyle{theorem}
\newtheorem{ques}{Question}
\newtheorem{lemma}[ques]{Lemma}
\newtheorem{propo}[ques]{Proposition}
\newtheorem{coro}[ques]{Corollary}
\newtheorem{thm}[ques]{Theorem}
\newtheorem{defi}[ques]{Definition}
\newtheorem{obs}[ques]{Observation}
\newcommand{\pair}[1]{\langle #1\rangle}
\newcommand{\R}{\mathbb{R}}
\newcommand{\ZFC}{\ensuremath{\mathsf{ZFC}}}
\title[When is $X^2\setminus\Delta_X$ of a LOTS $X$ functionally countable?]{When is the complement of the diagonal of a LOTS functionally countable?}
\author[L. E. Gutiérrez-Domínguez]{L. E. Gutiérrez-Domínguez}
\author[R. Hernández-Gutiérrez]{Rodrigo Hernández-Gutiérrez}
\address{Departamento de Matemáticas, Universidad Autónoma Metropolitana campus Iztapalapa, Av. San Rafael Atlixco 186, Leyes de Reforma 1a Sección, Iztapalapa, 09310, Mexico city, Mexico}
\email[R. Hernández-Gutiérrez]{rod@xanum.uam.mx}
\email[L. E. Gutiérrez-Dominguez]{luenriquegudo@gmail.com}
\date{\today}
\keywords{functionally countable, linearly ordered space, Aronszajn line, Souslin line}
\subjclass[2020]{Primary: 54F05, Secondary: 06A05, 54A35, 54C30.}
\begin{document}

\begin{abstract}
 In a 2021 paper, Tkachuk asked whether there is a non-separable LOTS $X$ such that $X^2\setminus\{\langle x,x\rangle\colon x\in X\}$ is functionally countable. In this paper we prove that such a space, if it exists, must be an Aronszajn line and admits a $\leq 2$-to-$1$ retraction to a subspace that is a Souslin line. After this, assuming the existence of a Souslin line, we prove that there is Souslin line that is functionally countable. Finally, we present an example of a functionally countable Souslin line $L$ such that $L^2\setminus\{\langle x,x\rangle\colon x\in L\}$ is not functionally countable.
\end{abstract}

\maketitle

\section{Introduction}

A topological space $X$ is \emph{functionally countable} if for every continuous function $f\colon X\to \R$ the image $f[X]$ is countable. The \emph{diagonal} of a space $X$ is the subset $\Delta_X=\{\pair{x,x}\colon x\in X\}$ of $X\times X$. In \cite{tka-corson_funct_ctble}, Tkachuk studies spaces $X$ such that $X^2\setminus\Delta_X$ is functionally countable. In this note we are interested in the following of Tkachuk's questions from that paper.

\begin{ques}\cite{tka-corson_funct_ctble}\label{ques-tka-1}
Let $X$ be a linearly ordered space such that $X^2\setminus\Delta_X$ is functionally countable. Is $X$ separable?
\end{ques}

Under the assumption that $X$ is compact, in \cite{tka-corson_funct_ctble} Tkachuk proved that Question \ref{ques-tka-1} has an affirmative answer. In this paper we show that it is consistent with the \ZFC{} axioms that the answer to Tkachuk's Question \ref{ques-tka-1} is in the affirmative without any further topological assumption. In fact, we prove something stronger. Namely, if $X$ is an uncountable linearly ordered space such that $X^2\setminus\Delta_X$ is functionally countable, then $X$ must be an Aronszajn line and there is a Souslin line $Y\subset X$ and a retraction $f\colon X\to Y$; see Theorem \ref{thm:main} below for a more precise statement. Since the existence of Souslin lines is independent of \ZFC{}, it is consistent that any space $X$ satifsying the hypotheses of Tkachuk's Question \ref{ques-tka-1} is countable.
 
We also explore the question of whether there is a Souslin line $X$ such that $X^2\setminus\Delta_X$ is functionally countable. First, we show that if there is a Souslin line, then there is a functionally countable Souslin line; see Theorem \ref{thm:funct_ctble_Suslin} below. This result answers various questions from \cite{tka_wilson-GO_fc} (see the discussion following). After this we prove that functional countability of a Souslin line $X$ does not imply that $X^2\setminus\Delta_X$ is functionally countable; see Theorem \ref{thm:Suslin-example} below.

\section{Preliminaries: linearly ordered sets}

In this section we remind the reader about some definitions in the theory of linearly ordered spaces.

Let $\pair{X,<}$ be a linearly ordered set. We will assume familiarity with intervals of the form $(a,b)$, $(\leftarrow,a)$, $(a,\rightarrow)$, $[a,b]$, $(\leftarrow,a]$ and $[a,\rightarrow)$, where $a,b\in X$. The \emph{endpoints} of a subset $Y\subset X$ are $\min (Y)$ and $\max (Y)$, if they exist. A subset $Y\subset X$ is said to be \emph{densely ordered} if for every $a,b\in Y$ with $a<b$ there exists $c\in (a,b)\cap Y$. A set $J\subset X$ is \emph{convex} if for all $a,b\in J$, $a<b$ implies that $(a,b)\subset J$. If $A,B\subset X$ we will say that $A<_sB$ (``$A$ is setwise before $B$'') if for every $a\in A$ and $b\in B$, $a<b$. 

Let $\langle X,<\rangle$ and $\langle Y,<\rangle$ be linearly ordered sets, and let $f\colon X\to Y$. If $x_0<x_1$ implies $f(x_0)<f(x_1)$ for all $x_0,x_1\in X$, we will say that $f$ is an \emph{order embedding}. If $f$ is an order embedding, we will say that \emph{$Y$ contains an ordered copy of $X$}, or that \emph{$X$ can be order-embedded in $Y$}. If $f$ is also surjective, then it is called an \emph{order isomorphism} and $X$ is said to be \emph{order isomorphic} to $Y$.

It is well known that for every linearly ordered set $\langle X,<\rangle$ there is a linearly ordered set $\langle Y,<\rangle$ and an order embedding $e\colon X\to Y$ such that
\begin{itemize}
 \item $Y$ is Dedekind complete, that is, every nonempty subset of $Y$ has a supremum and an infinum, and
 \item $e[X]$ is topologically dense in $Y$, that is, every nonempty open interval in $Y$ contains an element of $e[X]$. 
\end{itemize}
Such $Y$ is unique (modulo order isomorphisms) and is the \emph{Dedekind completion} of $X$.

Given a linearly ordered set  $\langle X,<\rangle$, its \emph{order topology} is the topology $\tau_<$ generated by all intervals $(\leftarrow,a)$ and $(a,\rightarrow)$ for $a\in X$;  $\langle X,\tau_<\rangle$ is called a \emph{linearly ordered topological space} (LOTS). A \emph{generalized ordered space} (GO space) is a subspace of a LOTS. Equivalently, one can say that $\langle X,\tau\rangle$ is a GO space if and only if there is a linear order $<$ such that $\tau_<\subset\tau$ and $\tau$ has a basis of open sets that are convex; see \cite[Chapter VII, 1, A]{nagata}. In general, a subspace of a LOTS will not be a LOTS but it is easy to see that a subspace $Y$ of a LOTS $X$ is a LOTS if $Y$ is closed in $X$ or $Y$ is densely ordered.

Let $X$ be a LOTS and $U$ a non-empty open subset of $X$. Then $U$ is the union of a collection of convex open subsets. If we take maximal convex open subsets of $U$, we may refer to them as the \emph{convex components} of the open set $U$.

We will consider the ordinal $\omega_1$ with its canonical order; $\omega_1^\ast$ will denote $\omega_1$ with the reverse of its canonical order. The existence of ordered copies of $\omega_1$ or $\omega_1^\ast$ is related to first countability in the following way.

\begin{lemma}\label{lemma:first-countable-Dedekind-complete}
 Let $Y$ be a Dedekind complete linearly ordered set and let $X\subset Y$ be topologically dense in $Y$. Then the following are equivalent:
 \begin{enumerate}[label=(\alph*)]
  \item $X$ contains no ordered copy of neither $\omega_1$ nor $\omega_1^\ast$,
  \item $Y$ contains no ordered copy of neither $\omega_1$ nor $\omega_1^\ast$, and
  \item $Y$ is first countable with the order topology.
 \end{enumerate}
\end{lemma}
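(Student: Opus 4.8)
The plan is to prove the three conditions equivalent by establishing the cycle (a) $\Rightarrow$ (b) $\Rightarrow$ (c) $\Rightarrow$ (a), noting at the outset that the reverse implication (b) $\Rightarrow$ (a) is immediate, since $X\subseteq Y$ means any ordered copy of $\omega_1$ or $\omega_1^\ast$ inside $X$ is automatically one inside $Y$. The two genuine tools are the Dedekind completeness of $Y$, which lets me form suprema and infima of the transfinite sequences involved, and the density of $X$ in $Y$, which lets me realize inside $X$ any ``approach'' to a point of $Y$. Throughout I will use the following reformulation of first countability for an order topology: a point $y$ fails to have a countable local base exactly when it is a limit from the left whose set of predecessors has uncountable cofinality, or, dually, a limit from the right whose set of successors has uncountable coinitiality. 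A point that has an immediate predecessor, an immediate successor, or that is an endpoint contributes nothing on the corresponding side, and these cases are folded into the bookkeeping of a local base without difficulty.

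For (b) $\Rightarrow$ (c) I would argue contrapositively on each side. Suppose some $y$ is a left-limit whose set of predecessors has uncountable cofinality. I build a strictly increasing $\omega_1$-sequence converging up to $y$ by transfinite recursion: at a countable stage $\beta$ the points already chosen form a countable, hence non-cofinal, subset of $(\leftarrow,y)$, so their supremum $s$ (which exists by Dedekind completeness) satisfies $s<y$; since $y$ is a left-limit the interval $(s,y)$ is nonempty and provides a new point of $Y$ with which to continue. The resulting sequence is an ordered copy of $\omega_1$ in $Y$, contradicting (b); the decreasing, right-hand case produces a copy of $\omega_1^\ast$ symmetrically. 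Thus (b) forces every point to have countable character on both sides, and pairing a countable left-approaching sequence with a countable right-approaching one yields a countable local base, giving (c). The implication (c) $\Rightarrow$ (a) runs the same machine in reverse: if $X$ contained a strictly increasing $\{x_\alpha:\alpha<\omega_1\}$, this would also be a copy in $Y$, and its supremum $y^\ast$ (Dedekind completeness again) would be a left-limit whose predecessors are cofinally approached by the $x_\alpha$; one then checks directly that $y^\ast$ has no countable local base, contradicting (c). A copy of $\omega_1^\ast$ in $X$ is handled dually with an infimum.

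The implication I expect to be the crux is (a) $\Rightarrow$ (b), because it is the one place where a transfinite copy must actually be manufactured \emph{inside} the dense subset $X$ rather than merely in the ambient complete order $Y$. Arguing contrapositively, suppose $Y$ has a strictly increasing $\{y_\alpha:\alpha<\omega_1\}$ and let $y^\ast=\sup_\alpha y_\alpha$. The naive idea of choosing a point of $X$ in each gap $(y_\alpha,y_{\alpha+1})$ fails, since $Y$ may have uncountably many jumps where such an interval is empty; the remedy is to work beneath the single point $y^\ast$. First I show the predecessors of $y^\ast$ have uncountable cofinality: a countable cofinal subset of $(\leftarrow,y^\ast)$ would be dominated by some $y_\gamma$ with $\gamma<\omega_1$, and then $y_{\gamma+1}$ would sit strictly between that subset and $y^\ast$, a contradiction. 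Since $X$ is dense and $y^\ast$ is a left-limit, $X\cap(\leftarrow,y^\ast)$ is cofinal below $y^\ast$ and hence also has uncountable cofinality; a transfinite recursion exactly like the one in the previous paragraph --- at each countable stage the chosen points of $X$ have supremum $s<y^\ast$, and density yields a new point of $X$ in $(s,y^\ast)$ --- produces a strictly increasing $\omega_1$-sequence lying entirely in $X$, that is, a copy of $\omega_1$ in $X$. The case of a copy of $\omega_1^\ast$ in $Y$ is symmetric, using the infimum of a decreasing $\omega_1$-sequence. This closes the cycle and establishes the equivalence.
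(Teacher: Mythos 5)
Your proposal is correct. Note that the paper itself gives no proof of this lemma: it declares the result well known and refers the reader to the literature (citing Proposition 3.4 of the Tkachuk--Wilson paper only for the implication (a) $\Rightarrow$ (b)), so there is no in-paper argument to compare against; your write-up simply supplies the omitted proof, and it does so soundly. The cycle (a) $\Rightarrow$ (b) $\Rightarrow$ (c) $\Rightarrow$ (a) is a sensible decomposition, your characterization of failure of first countability at a point of a LOTS via uncountable one-sided cofinality/coinitiality is the standard and correct tool, and all three transfinite recursions are carried out legitimately: at each countable stage the previously chosen points are not cofinal below the target point, their supremum exists by Dedekind completeness and lies strictly below it, and the absence of an immediate predecessor makes the remaining interval nonempty. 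You also correctly identify the one genuinely delicate point, namely (a) $\Rightarrow$ (b): the naive attempt to pick points of $X$ in the gaps $(y_\alpha,y_{\alpha+1})$ does fail when $Y$ has jumps, and your remedy --- pass to $y^\ast=\sup_\alpha y_\alpha$, show its set of predecessors has uncountable cofinality, and use topological density of $X$ to find a cofinal set of $X$-points below $y^\ast$ from which an $\omega_1$-copy inside $X$ is extracted --- is exactly the right move and is carried out without gaps; the dual cases for $\omega_1^\ast$ are indeed symmetric.
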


The proof of Lemma \ref{lemma:first-countable-Dedekind-complete} is well known and we will not include it. The proof that (a) implies (b) can be found in the recent paper \cite[Proposition 3.4]{tka_wilson-GO_fc}. 

A linearly ordered set is an \emph{Aronszajn} line if it is uncountable and has no ordered copies of $\omega_1$, $\omega_1^\ast$ or any uncountable subset of $\R$. A \emph{Souslin line} is a linearly ordered set that with its order topology has countable cellularity (every pairwise disjoint family of open sets is countable) but is not separable (no countable subset is topologically dense). Aronszajn lines exist in \ZFC{} (see section 5 in \cite{todorcevic-handbook}) but it is well known that the existence of Souslin lines is independent of the \ZFC{} axioms (see sections III.5 and III.7 in \cite{kunen}). A relation between Aronszajn lines and Souslin lines can be found in \cite[Corollary 3.10]{todorcevic-handbook}.

As it is well known (see \cite{bennet_lutzer-encyclopedia}), LOTS are hereditarily normal so we will remain in the realm of Tychonoff spaces in our discussion.  For a survey of the theory of ordered spaces relevant here from a set-theoretic perspective, see \cite{todorcevic-handbook}.
 For a complete survey of properties of LOTS and GO spaces, see \cite{bennet_lutzer-encyclopedia}.

\section{Conditions that a counterexample must satisfy}

First, let us make the observation that by considering GO spaces we do not get a more difficult problem than that for LOTS. Start with a GO space $\langle X,\tau\rangle$ such that $X^2\setminus\Delta_X$ is functionally countable. Let $<$ be a linear order such that $\tau_<\subset\tau$ (see the previous section) and let $Y$ denote $X$ with the topology $\tau_<$. Notice that the identity function $\mathsf{id}\colon X\to Y$ is a continuous bijection. But then by considering $\mathsf{id}\times\mathsf{id}\colon X^2\to Y^2$ and letting $\phi=(\mathsf{id}\times\mathsf{id})\restriction (X^2\setminus\Delta_X)$ we obtain a continuous bijection $\phi\colon X^2\setminus\Delta_X\to Y^2\setminus\Delta_Y$. Then $Y$ is a LOTS and, according to \cite[Proposition 3.1 (a)]{tka-corson_funct_ctble}, $Y^2\setminus\Delta_Y$ is functionally countable. Thus, we may restrict our discussion to LOTS (compare with the observation made in \cite{tka_wilson-GO_fc} before 3.14).

A space $X$ satisfies the discrete countable chain condition, DCCC for short, if every discrete collection of nonempty open subsets of $X$ is countable. As observed in \cite[Proposition 3.1 (c)]{tka-corson_funct_ctble}, if a space is functionally countable, then it satisfies the DCCC. Thus, a way to prove that a space is not functionally countable is to prove that it does not satisfy the DCCC.

Next, we will prove the following technical result that will allow us to define discrete families of open sets; essentially we are extracting one of the components of the proof of Theorem 3.13 in \cite{tka-corson_funct_ctble}.

\begin{lemma}\label{lemma:DCCC}
Let $X$ be a LOTS. Assume that there exists a family $\mathcal{U}$ of subsets of $X$ such that
\begin{enumerate}[label=(\roman*)]
    \item for each $U\in\mathcal{U}$ there are nonempty open sets $I_U,J_U\subset U$ with $I_U<_s J_U$, and
    \item if $U,V\in\mathcal{U}$ and $U\neq V$, then $U<_s V$ or $V<_s U$.
\end{enumerate}
Then $\{I_U\times J_U\colon U\in\mathcal{U}\}$ is a discrete collection of nonempty open subsets of $X^2\setminus\Delta_X$
\end{lemma}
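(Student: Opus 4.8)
The plan is to verify the two assertions separately: that each $I_U\times J_U$ is a nonempty open subset of $X^2\setminus\Delta_X$, and that the family $\{I_U\times J_U\colon U\in\mathcal{U}\}$ is discrete. The first is immediate. Each $I_U\times J_U$ is open in $X^2$ as a product of open sets and nonempty because both factors are; moreover, if $\langle a,b\rangle\in I_U\times J_U$ then $a\in I_U$ and $b\in J_U$, so condition (i) gives $a<b$ and hence $\langle a,b\rangle\notin\Delta_X$. Thus $I_U\times J_U\subseteq X^2\setminus\Delta_X$ and is open in the subspace.

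For discreteness I would first isolate the following observation as the engine of the argument: \emph{if $A,B\subseteq X$ are open with $A<_s B$, then $A\times B$ meets at most one member of the family}. Indeed, suppose $U\neq V$ and both $I_U\times J_U$ and $I_V\times J_V$ meet $A\times B$; by condition (ii) we may assume $U<_s V$. Choosing $b_U\in B\cap J_U$ and $a_V\in A\cap I_V$, we have $b_U\in J_U\subseteq U$ and $a_V\in I_V\subseteq V$, so $U<_s V$ gives $b_U<a_V$, while $A<_s B$ gives $a_V<b_U$, a contradiction.

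It then remains, for each $p=\langle x,y\rangle\in X^2\setminus\Delta_X$, to exhibit a neighborhood of the required form. When $x<y$ I would produce open intervals $A\ni x$ and $B\ni y$ with $A<_s B$: if $(x,y)\neq\emptyset$, choose $c\in(x,y)$ and set $A=(\leftarrow,c)$, $B=(c,\rightarrow)$; otherwise $y$ is the immediate successor of $x$ and one checks that $A=(\leftarrow,y)$, $B=(x,\rightarrow)$ satisfy $A<_s B$. Either way $A\times B$ is an open neighborhood of $p$ meeting at most one member, by the observation above. When $x>y$ the point lies strictly below the diagonal; since every member of the family lies in the region $\{\langle s,t\rangle\colon s<t\}$, it suffices to find a neighborhood of $p$ inside $\{\langle s,t\rangle\colon s>t\}$. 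This is done symmetrically, taking $(c,\rightarrow)\times(\leftarrow,c)$ for some $c\in(y,x)$ when such $c$ exists and $(y,\rightarrow)\times(\leftarrow,x)$ otherwise; such a neighborhood meets no member at all.

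The only real subtlety, and the step I would be most careful about, is the case in which the two coordinates of $p$ are consecutive in the order, where one cannot simply cut at an intermediate point; there the verification that $(\leftarrow,y)<_s(x,\rightarrow)$ uses precisely that nothing lies strictly between $x$ and $y$. It is also worth stressing conceptually that the construction collapses exactly on the diagonal, where the rectangles accumulate, which is why discreteness can be claimed only in $X^2\setminus\Delta_X$.
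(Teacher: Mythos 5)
Your proof is correct, and it is organized genuinely differently from the paper's. The paper fixes an off-diagonal point $\langle x,y\rangle$ with $x<y$ and then runs a four-case analysis on how the sets $I_U\cup J_U$ sit relative to the interval $(x,y)$ (two or more members meeting $(x,y)$; exactly one; one member with $I_U\subset(\leftarrow,x]$ and $J_U\subset[y,\rightarrow)$; all members entirely to one side), choosing a different rectangle $W$ in each case and checking, separately for $V<_s U$ and $U<_s V$, that $W$ misses every other $I_V\times J_V$. You instead factor out a single reusable observation --- any rectangle $A\times B$ with $A<_s B$ meets at most one member, since $b_U\in J_U\subset U$, $a_V\in I_V\subset V$, $U<_s V$ and $A<_s B$ yield the contradiction $b_U<a_V<b_U$ --- after which the only work is to produce, around each point, a rectangle of that form: cutting at $c\in(x,y)$ when that interval is nonempty, using $(\leftarrow,y)\times(x,\rightarrow)$ when $x$ and $y$ are consecutive (where $<_s$ holds precisely because $(x,y)=\emptyset$), and, below the diagonal, taking a neighborhood inside $\{\langle s,t\rangle\colon s>t\}$, which misses all members outright. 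Your decomposition buys robustness: the case analysis on the family disappears, so there is no exhaustiveness to check; this matters because $I_U$ and $J_U$ are not assumed convex, and a member whose pieces lie on both sides of $[x,y]$ falls under none of the paper's Cases 1--4 as literally stated, whereas your observation covers it automatically. Your below-diagonal case is also self-contained, while the paper's (a product $W_0\times W_1$ of disjoint open sets) implicitly needs $W_0$ and $W_1$ to be convex to rule out intersections. What the paper's longer route buys is mainly explicitness: in each of its cases it pins down which single member, if any, can still meet the chosen neighborhood.
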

\begin{proof}
Let $x,y\in X$ such that $x\neq y$, we need to find an open set $W\subset X^2$ such that $\langle x,y\rangle\in W$ and $\lvert\{U\in\mathcal{U}\colon (I_U\times J_U)\cap W\neq\emptyset\}\rvert\leq 1$.

First, consider the case when $y<x$. Let $W_0$ and $W_1$ be open subsets of $X$ such that $y\in W_0\subset(\leftarrow,x)$, $x\in W_1\subset(y,\rightarrow)$ and $W_0\cap W_1=\emptyset$. Then $W=W_0\times W_1$ is such that $\langle x,y\rangle\in W$ and $(I_U\times J_U)\cap W=\emptyset$ for all $U\in\mathcal{U}$.

So assume that $x<y$. We divide our analysis in four cases:\vskip10pt

\setcounter{case}{0}
\begin{case} There are $U_0,U_1\in\mathcal{U}$ with $U_0,U_1$ such that $(I_{U_0}\cup J_{U_0})\cap (x,y)\neq\emptyset$ and $(I_{U_1}\cup J_{U_1})\cap (x,y)\neq\emptyset$.
\end{case}

By our hypothesis we may assume that $U_0<_s U_1$. Let $p\in (I_{U_0}\cup J_{U_0})\cap (x,y)$ and $q\in(I_{U_1}\cup J_{U_1})\cap (x,y)$. Notice that $x<p<q<y$. Let $W=(\leftarrow,p)\times(q,\rightarrow)$. Then $W$ is an open subset of $X^2$ such that $\langle x,y\rangle\in W$; we claim that $W\cap (I_V\times J_V)=\emptyset$ for every $V\in\mathcal{U}$.

Let $V\in\mathcal{U}$ and $\langle a,b\rangle\in W$. If $V=U_0$ or $V<_s U_0$, since $b>q$ and $q\in I_{U_1}\cup J_{U_1}$, $b\notin J_V$; thus, $\langle a,b\rangle\notin I_V\times J_V$. If $U_0<_s V$, since $a<p$ and $p\in I_{U_0}\cup J_{U_0}$, $a\notin I_V$; thus,  $\langle a,b\rangle\notin I_V\times J_V$.\vskip10pt

\begin{case} There exists a unique $U\in\mathcal{U}$ such that $(I_U\cup J_U)\cap (x,y)\neq\emptyset$.
\end{case}

Let $z\in (I_U\cup J_U)\cap (x,y)$ and $W=(\leftarrow,z)\times(z,\rightarrow)$. Clearly, $\langle x,y\rangle\in W$; we claim that $W\cap (I_V\times J_V)=\emptyset$ for every $V\in\mathcal{U}\setminus\{U\}$.

So let $V\in\mathcal{U}$ with $U\neq V$ and $\langle a,b\rangle\in W$. If $V<_s U$, since $z<b$ and $z\in I_U\cup J_U$, we conclude that $b\notin J_V$; thus, $\langle a,b\rangle\notin I_V\times J_V$. If $U<_s V$, since $a<z$ and $z\in I_U\cup J_U$, we convince ourselves that $a\notin I_V$; thus, $\langle a,b\rangle\notin I_V\times J_V$.\vskip10pt

\begin{case}There exists $U\in\mathcal{U}$ such that $I_{U}\subset (\leftarrow,x]$ and $J_{U}\subset[y,\rightarrow)$.\end{case}

Let $W=(\leftarrow,y)\times(x,\rightarrow)$, notice that $\langle x,y\rangle\in W$. We claim that $W\cap (I_V\times J_V)=\emptyset$ for every $V\in\mathcal{U}\setminus\{U\}$.

So let $V\in\mathcal{U}$ with $U\neq V$ and $\langle a,b\rangle\in W$. If $V<_s U$, since $J_V\subset (\leftarrow,x]$ and $x<b$, $b\notin J_V$; therefore, $\langle a,b\rangle\notin I_V\times J_V$. If $U<_s V$, since $I_V\subset[y,\rightarrow)$ and $a<y$, $a\notin I_V$; therefore, $\langle a,b\rangle\notin I_V\times J_V$.\vskip10pt

\begin{case} For every $U\in\mathcal{U}$ either $(I_U\cup J_U)\subset (\leftarrow,x]$ or $(I_U\cup J_U)\subset [y,\rightarrow)$.\end{case}

In this case we also take $W=(\leftarrow,y)\times(x,\rightarrow)$. Again, $\langle x,y\rangle\in W$. We claim that $W\cap (I_U\times J_U)=\emptyset$ for every $U\in\mathcal{U}$.

Let $U\in\mathcal{U}$ and $\langle a,b\rangle\in W$. If $I_U\cup J_U\subset(\leftarrow,x]$, since $x<b$, $b\notin J_U$ and thus,  $\langle a,b\rangle\notin I_U\times J_U$. If $I_U\cup J_U\subset[y,\rightarrow)$, since $a<y$, $a\notin I_U$ and thus, $\langle a,b\rangle\notin I_U\times J_U$.\vskip10pt

Since these are all the possible cases, we conclude that $\{I_U\times J_U\colon U\in\mathcal{U}\}$ is discrete.
\end{proof}

In 1983, Galvin \cite{galvin} asked if the product of two functionally countable spaces is functionally countable. Independently of each other, Hernández and K.P. Hart solved this question in the negative. The solution given by Hernández appears in \cite{s_hernandez}, where he proves that the product of the LOTS $\omega_1$ and the GO subspace $\{\alpha+1\colon\alpha\in\omega_1\}\cup\{\omega_1\}$ of $\omega_1+1$ is not functionally countable.  We essentially apply Hernandez's idea along with Lemma \ref{lemma:DCCC} here to prove the following.

\begin{propo}\label{propo:no_copies_omega1}
Let $X$ be a LOTS. If $X$ contains an ordered copy of either $\omega_1$ or $\omega_1^\ast$, then $X^2\setminus\Delta_X$ does not satisfty the DCCC and thus, it is not functionally countable.
\end{propo}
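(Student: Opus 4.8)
The plan is to feed Lemma \ref{lemma:DCCC} an \emph{uncountable} family. Concretely, it suffices to produce a family $\mathcal{U}$ of subsets of $X$ satisfying conditions (i) and (ii) of that lemma with $\lvert\mathcal{U}\rvert=\aleph_1$: the resulting collection $\{I_U\times J_U\colon U\in\mathcal{U}\}$ is then, by the lemma, a discrete family of nonempty open subsets of $X^2\setminus\Delta_X$, and it has $\aleph_1$ members (its sets are pairwise disjoint once the pieces of $\mathcal{U}$ are setwise separated), so $X^2\setminus\Delta_X$ fails the DCCC. Since functional countability implies the DCCC by \cite[Proposition 3.1 (c)]{tka-corson_funct_ctble}, this shows $X^2\setminus\Delta_X$ is not functionally countable. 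I will carry this out when $X$ contains a copy of $\omega_1$; the case of $\omega_1^\ast$ follows by applying this case to $X$ endowed with the reverse order, which generates the same topology and the same space $X^2\setminus\Delta_X$ while turning the copy of $\omega_1^\ast$ into a copy of $\omega_1$.

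For the construction, fix an order embedding of $\omega_1$ into $X$ and write its image as an increasing transfinite sequence $\{x_\gamma\colon\gamma<\omega_1\}$, so $\gamma<\delta$ implies $x_\gamma<x_\delta$. Choose $\aleph_1$ many pairwise disjoint five-element blocks of indices, the $\alpha$-th block being $\gamma^\alpha_0<\gamma^\alpha_1<\gamma^\alpha_2<\gamma^\alpha_3<\gamma^\alpha_4$, arranged so that the entire $\alpha$-th block precedes the entire $\beta$-th block whenever $\alpha<\beta$. Writing $a_\alpha=x_{\gamma^\alpha_0}$, $b_\alpha=x_{\gamma^\alpha_1}$, $c_\alpha=x_{\gamma^\alpha_2}$, $d_\alpha=x_{\gamma^\alpha_3}$, $e_\alpha=x_{\gamma^\alpha_4}$, we have $a_\alpha<b_\alpha<c_\alpha<d_\alpha<e_\alpha$ and $e_\alpha<a_\beta$ for $\alpha<\beta$. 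I then set
$$I_\alpha=(a_\alpha,c_\alpha),\qquad J_\alpha=(c_\alpha,e_\alpha),\qquad U_\alpha=I_\alpha\cup J_\alpha,$$
and let $\mathcal{U}=\{U_\alpha\colon\alpha<\omega_1\}$. Each of $I_\alpha,J_\alpha$ is a nonempty open interval (it contains $b_\alpha$, respectively $d_\alpha$), and splitting at $c_\alpha$ gives $I_\alpha<_s J_\alpha$, so condition (i) holds with $I_{U_\alpha}=I_\alpha$ and $J_{U_\alpha}=J_\alpha$. For condition (ii), if $\alpha<\beta$ then every element of $U_\alpha\subset(a_\alpha,e_\alpha)$ lies below $e_\alpha<a_\beta$, hence below every element of $U_\beta\subset(a_\beta,e_\beta)$, so $U_\alpha<_s U_\beta$.

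The only delicate point — and the place where Hernández's idea is used — is the \emph{nonemptiness} of $I_\alpha$ and $J_\alpha$: consecutive points of the embedded copy of $\omega_1$ may be consecutive in $X$, so one cannot take an open interval whose endpoints are adjacent sequence points and expect it to be nonempty. This is why I do not split \emph{between} two sequence points but rather split each block at its middle point $c_\alpha$, using the flanking points $b_\alpha\in I_\alpha$ and $d_\alpha\in J_\alpha$ to guarantee that both halves are nonempty open sets. With $\mathcal{U}$ in hand, Lemma \ref{lemma:DCCC} gives that $\{I_\alpha\times J_\alpha\colon\alpha<\omega_1\}$ is a discrete collection of nonempty open subsets of $X^2\setminus\Delta_X$; since the blocks are setwise separated these sets are pairwise disjoint, so the collection is uncountable and the DCCC fails, whence $X^2\setminus\Delta_X$ is not functionally countable. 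I expect the block bookkeeping to be the only routine-but-fiddly part, the genuine content being the choice of $I_\alpha,J_\alpha$ just described.
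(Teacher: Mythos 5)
Your proposal is correct and follows essentially the same route as the paper: both extract from the ordered copy of $\omega_1$ an uncountable family of setwise-separated blocks, feed it to Lemma \ref{lemma:DCCC}, and conclude that $X^2\setminus\Delta_X$ fails the DCCC and hence is not functionally countable (the $\omega_1^\ast$ case being symmetric). The only difference is bookkeeping: the paper uses the intervals $U_{\beta+n}=(x_{\beta+3n},x_{\beta+3n+3})$ and leaves the choice of $I_U<_s J_U$ to the reader, whereas your five-point blocks split at the middle point $c_\alpha$ make the nonemptiness of both halves explicit and avoid any case analysis about consecutive points.
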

\begin{proof}
Assume that $X$ contains an ordered copy of $\omega_1$. So there is $\{x_\alpha\colon\alpha<\omega_1\}\subset X$ such that $\alpha<\beta<\omega_1$ implies $x_\alpha<x_\beta$.

Given $\alpha<\omega_1$ there are unique ordinals $\beta<\omega_1$ that is a limit or $0$ and $n<\omega$ such that $\alpha=\beta+n$; define $U_{\alpha}=(x_{\beta+3n},x_{\beta+3n+3})$. Then it is easy to see that the family $\mathcal{U}=\{U_\alpha\colon\alpha<\omega_1\}$ satisfies the hypothesis of Lemma \ref{lemma:DCCC}. Thus, $X$ does not satisfy the DCCC.

If $X$ contains an ordered copy of $\omega_1^\ast$, we may follow an analogous argument to conclude that $X$ does not satisfy the DCCC.
\end{proof}

In particular, from Lemma \ref{lemma:first-countable-Dedekind-complete} we can conclude that if $X$ is a LOTS with $X^2\setminus\Delta_X$ functionally countable, then $X$ is first countable. Our next step is proving that $X$ must be a Aronszajn line; we will need the following.

\begin{propo}\label{propo:no-uncountable-set-reals}
 If $X$ is a functionally countable LOTS, then $X$ does not contain order isomorphic copies of uncountable subsets of $\R$.
\end{propo}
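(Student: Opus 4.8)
The plan is to argue by contradiction: assuming a functionally countable LOTS $X$ contains an order isomorphic copy $Y$ of some uncountable $A\subseteq\R$, I would construct a single continuous function $f\colon X\to\R$ with uncountable image, which is impossible.

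First I would replace $A$ by a better-behaved subset. Since $\R$ is second countable, all but countably many points of $A$ are two-sided condensation points of $A$; let $B$ be the set of these points, so $B$ is uncountable. Moreover $B$ is densely ordered: if $b_1<b_2$ lie in $B$, then $b_1$ being a right condensation point of $A$ forces $A\cap(b_1,b_2)$ to be uncountable, hence (discarding the countably many non-condensation points) it contains a point of $B$. Intersecting with a bounded interval still meeting $B$ in an uncountable set, I may also assume $B$ is bounded. Restricting the original order embedding, $B$ is still order-embedded into $X$.

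The key device is the Dedekind completion. Being a subspace of $\R$, $B$ has a countable topologically dense subset, which is order-dense because $B$ is densely ordered; hence the Dedekind completion $\hat B$ is a Dedekind complete, densely ordered linear order with endpoints and a countable order-dense subset, so by Cantor's characterization of the unit interval it is order isomorphic to $[0,1]$. Transporting $B$ through this isomorphism gives an order isomorphic copy $B^*\subseteq[0,1]$ of $B$. Crucially, since $B$ is topologically dense in its Dedekind completion, $B^*$ is topologically dense in $[0,1]$; equivalently $B^*$ has no gaps, i.e.\ $(u,v)\cap B^*\neq\emptyset$ for all reals $u<v$ in $[0,1]$. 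Let $g\colon Y\to B^*$ denote the resulting order isomorphism.

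Finally I would set $f\colon X\to[0,1]$, $f(x)=\sup(\{0\}\cup\{g(y)\colon y\in Y,\ y\le x\})$. This $f$ is visibly non-decreasing and satisfies $f(y)=g(y)$ for every $y\in Y$, so $f[X]\supseteq B^*$ is uncountable. The main obstacle is proving that $f$ is continuous. A non-decreasing real-valued function on a LOTS is continuous except at jump points, and each jump opens a nonempty gap in the range; continuity can fail at $x_0$ only when $x_0$ is a limit of the domain from some side, in which case the relevant one-sided limit of $f$ is $\sup\{g(y)\colon y<x_0\}$ or $\inf\{g(y)\colon y>x_0\}$, i.e.\ it is governed by the cut that $x_0$ induces on $B^*$. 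Topological density of $B^*$ in $[0,1]$ is precisely what forces these values to coincide with each other and with $f(x_0)$, so no jump occurs and $f$ is continuous. This contradicts the functional countability of $X$. The heart of the argument is therefore the passage to the Dedekind completion, which converts the (possibly genuine) order gaps of an arbitrary uncountable subset of $\R$ into ordinary points of $[0,1]$ and thereby eliminates every potential discontinuity of the monotone extension.
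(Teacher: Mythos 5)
Your proof is correct, and its skeleton matches the paper's: prune the uncountable set of reals to a densely ordered core (your two-sided condensation points are essentially the paper's $Z_2$, obtained there by discarding points covered by rational intervals with countable trace, then immediate neighbours and endpoints), order-embed something into $[0,1]$ with topologically dense image, and take the monotone sup-extension $f$, whose continuity rests on exactly the ``density kills jumps'' mechanism you describe; the paper, too, leaves that continuity check as ``well known''. Where you genuinely diverge is in \emph{what} gets embedded and how uncountability of the image is certified. The paper pulls back to $X$ a countable order-dense subset $Q$ of the pruned set, uses Cantor's theorem to identify $Q$ with $\mathbb{Q}\cap(0,1)$, sup-extends this countable map, and then needs a separate injectivity argument: $f\restriction A$ is injective because between any two points of $A$ there lie two points of $Q$. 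You instead embed the entire uncountable set $B$ in one stroke, invoking the characterization of $[0,1]$ as the Dedekind completion of a bounded, separable, densely ordered set, so that $f[X]\supseteq B^*$ is uncountable with no further work. Your route trades the paper's more elementary ingredients (Cantor's theorem on countable dense orders plus the short injectivity step) for a slightly heavier but standard one, and it obliges you to verify---as you correctly do---that $B^*$ is \emph{topologically} dense in $[0,1]$ rather than merely densely ordered, since that density is precisely what every potential jump of $f$ is tested against; in exchange, the uncountability of the image becomes automatic.
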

\begin{proof}
We prove the contrapositive implication. Let $X$ be a LOTS and assume that there is an uncountable $Y\subset X$ such that there is an order embedding $e\colon Y\to\R$; call $e[Y]=Z$. Without loss of generality we may assume that $Z$ is bounded in $\R$. We will find an uncountable $A\subset Y$ and define a continuous function $f\colon X\to\R$ such that $f\restriction A$ is injective.

First, let $\mathcal{I}$ be the collection of all open intervals $I$ of $\R$ of the form $(a,b)$, where $a,b$ are rational numbers and $a<b$, such that $I\cap Z$ is countable. Let $Z_0=Z\cap(\bigcup\mathcal{I})$. Then $Z_0$ is countable.

Next, let $Z_1$ be the set of points $x\in Z\setminus Z_0$ such that $x$ is either an immediate succesor or an immediate predecesor of a point in $Z\setminus Z_0$. Since $c(\R)=\omega$,  $Z_1$ is countable. Let $p=\inf (Z\setminus Z_0)$ and $q=\sup(Z\setminus Z_0)$. Define $Z_2=Z\setminus(Z_0\cup Z_1\cup\{p,q\})$.

Notice that $Z_2$ is an uncountable subset without endpoints satisfying the following property: if $a,b\in Z_2$ with $a<b$, then $(a,b)\cap Z_2$ is uncountable. Since $\R$ is second countable, $Z_2$ has a countable, topologically dense set $D\subset Z_2$. Then it follows that every time $a,b\in Z_2$ and $a<b$, there exists $d\in D$ such that $a<d<b$.

Finally, let $A=e^{\leftarrow}[Z_2]$ and $Q=e^{\leftarrow}[D]$. Since $e$ is an order embedding, we conclude the following properties:
\begin{enumerate}[label=(\roman*)]
 \item\label{propo:no-uncountable-set-reals_prop_i} $Q$ is countable, densely ordered and has no endpoints.
 \item\label{propo:no-uncountable-set-reals_prop_ii} If $a,b\in A$ and $a<b$, then there are $p,q\in Q$ with $a<p<q<b$.
\end{enumerate}

As it was famously shown by Cantor, condition \ref{propo:no-uncountable-set-reals_prop_i} implies that $Q$ is order-isomorphic to the rationals $\mathbb{Q}$. So we may choose an order isomorphism $i\colon Q\to\mathbb{Q}\cap(0,1)$. Then we define a function $f\colon X\to[0,1]$ by
$$
f(x)=\left\{
\begin{array}{ll}
 0, & \textrm{ if }\{x\}<_s Q,\textrm{ and}\\
 \sup\{i(q)\colon q\in Q, q\leq x\}, & \textrm{ otherwise}.
\end{array}
\right.
$$
It is well known (and can be easily checked by the reader) that $f$ is continuous. By condition \ref{propo:no-uncountable-set-reals_prop_ii} above it follows that $f\restriction A$ is injective. Thus, $X$ is not functionally countable.
\end{proof}

If $X^2\setminus\Delta_X$ is functionally countable, from \cite[Proposition 3.2]{tka-corson_funct_ctble} we know that $X$ is functionally countable. Thus, using Propositions \ref{propo:no_copies_omega1} and \ref{propo:no-uncountable-set-reals} we conclude the following.

\begin{thm}\label{thm:Aronszajn}
Let $X$ be a LOTS such that $X^2\setminus\Delta$ is functionally countable. If $X$ is uncountable, then $X$ is an Aronszajn line.
\end{thm}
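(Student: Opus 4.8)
The plan is to recognize that this theorem is essentially the conjunction of the two preceding propositions together with the uncountability hypothesis, so the proof will consist of assembling those pieces rather than producing any new construction. Recall that the definition of an Aronszajn line requires three things: that the line be uncountable, that it contain no ordered copy of $\omega_1$ or $\omega_1^\ast$, and that it contain no ordered copy of an uncountable subset of $\R$. The first is given outright by hypothesis, and the remaining two exclusions are exactly what the earlier propositions deliver, so the work reduces to invoking each in the right form.

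First I would dispose of the ordinal copies. Since $X^2\setminus\Delta_X$ is functionally countable, by \cite[Proposition 3.1 (c)]{tka-corson_funct_ctble} it satisfies the DCCC. The contrapositive of Proposition \ref{propo:no_copies_omega1} then says that a LOTS whose complement-of-diagonal satisfies the DCCC can contain no ordered copy of $\omega_1$ nor of $\omega_1^\ast$; applying this to $X$ rules out both ordinal copies simultaneously.

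Next I would rule out uncountable copies of subsets of $\R$. Here the subtlety is that Proposition \ref{propo:no-uncountable-set-reals} is stated for functionally countable LOTS, so I first need that $X$ itself, and not merely $X^2\setminus\Delta_X$, is functionally countable. This is exactly what \cite[Proposition 3.2]{tka-corson_funct_ctble} provides. With $X$ functionally countable in hand, Proposition \ref{propo:no-uncountable-set-reals} immediately gives that $X$ contains no order-isomorphic copy of any uncountable subset of $\R$.

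Combining the uncountability hypothesis with these two exclusions matches the definition of an Aronszajn line verbatim, which completes the argument. I do not expect a genuine obstacle at this stage: all of the real difficulty was already absorbed into Lemma \ref{lemma:DCCC}, the Hernández-style construction in Proposition \ref{propo:no_copies_omega1}, and the Cantor argument in Proposition \ref{propo:no-uncountable-set-reals}. The only point demanding care is the order of invocation noted above, namely passing from functional countability of $X^2\setminus\Delta_X$ to that of $X$ before appealing to Proposition \ref{propo:no-uncountable-set-reals}.
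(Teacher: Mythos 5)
Your proposal is correct and follows essentially the same route as the paper: the paper likewise obtains functional countability of $X$ from \cite[Proposition 3.2]{tka-corson_funct_ctble} and then combines Propositions \ref{propo:no_copies_omega1} and \ref{propo:no-uncountable-set-reals} with the uncountability hypothesis to conclude that $X$ is an Aronszajn line. Your detour through the DCCC is harmless but unnecessary, since Proposition \ref{propo:no_copies_omega1} already states non-functional-countability of $X^2\setminus\Delta_X$ as its conclusion, so its contrapositive applies directly.
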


We pause to discuss the proof of Proposition \ref{propo:no-uncountable-set-reals}. In \cite[Theorem 3.10]{tka_wilson-GO_fc} it was proved that any functionally countable GO space $X$ has the property that every countable subset of $X$ has its closure countable. This sounds very similar to what we proved in Proposition \ref{propo:no-uncountable-set-reals}. Could we use  \cite[Theorem 3.10]{tka_wilson-GO_fc} to give a shorter proof of Proposition \ref{propo:no-uncountable-set-reals}? 

In short, we would like to know whether every LOTS with the property that the closure of every countable subset is countable does not contain order isomorphic copies of uncountable subsets of $\R$. First, notice that there are counterexamples if we consider GO spaces.

\begin{obs}\label{obs:counterexample}
 There exists a GO space $X$ with the property that every countable subset of $X$ has a countable closure but $X$ contains an uncountable subset that is order isomorphic to $\R$, and thus, $X$ is not functionally countable.
\end{obs}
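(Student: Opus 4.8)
The plan is to produce an explicit example rather than argue abstractly, because the statement is really a separation between two a priori different conditions. The tension to keep in mind is that property (B) — every countable subset has countable closure — pulls against containing a densely ordered copy of $\R$: if such a copy carried the usual topology of $\R$, then a countable dense subset of it would have uncountable closure, violating (B). I would resolve this by forcing the whole space to be \emph{discrete}, so that no countable set accumulates anywhere, while still arranging the underlying linear order to contain a densely ordered copy of $\R$. Concretely, let $X=\R\times\mathbb{Z}$ carry the lexicographic order and its order topology; being a LOTS, it is in particular a GO space, so it qualifies for the statement.

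The first step is to check discreteness. For any $\pair{r,n}\in X$, the point $\pair{r,n-1}$ is its immediate predecessor and $\pair{r,n+1}$ its immediate successor, so $(\pair{r,n-1},\pair{r,n+1})=\{\pair{r,n}\}$ is open; hence every singleton is open and $X$ is discrete. Property (B) is then immediate, since in a discrete space every set is closed and so the closure of a countable set equals itself and is countable. The second step is to exhibit the order copy of $\R$: the subset $A=\R\times\{0\}$ is order isomorphic to $\R$ via $\pair{r,0}\mapsto r$, and it is densely ordered inside $X$ because $\pair{(r+s)/2,0}$ lies strictly between $\pair{r,0}$ and $\pair{s,0}$ whenever $r<s$. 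Thus $X$ contains an uncountable order isomorphic copy of $\R$. For the final clause, non-functional-countability follows directly from Proposition \ref{propo:no-uncountable-set-reals} applied to the LOTS $X$; alternatively, and even more concretely, the projection $\pi\colon X\to\R$, $\pi(\pair{r,n})=r$, is continuous because $X$ is discrete and its image is all of $\R$, so $X$ is not functionally countable.

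The only genuinely delicate point — the step I would be most careful about — is the simultaneous demand that the space be discrete (which trivializes (B)) and that its linear order still contain a \emph{densely} ordered copy of $\R$; these seem to conflict, and this is precisely why the fibers are chosen to be $\mathbb{Z}$. Each fiber $\{r\}\times\mathbb{Z}$ supplies immediate neighbors isolating every point, yet the horizontal copy $\R\times\{0\}$ stays densely ordered. I would emphasize that the naive alternatives fail: keeping the usual topology on $\R$, or making points only one-sidedly isolated as in the Sorgenfrey line, leaves the space separable and hence violates (B); and replacing $\mathbb{Z}$ by a fiber such as $\omega_1$ destroys discreteness and again breaks (B), since in $\R\times\omega_1$ a countable set like $\mathbb{Q}\times\{0\}$ has the uncountable closure $\supseteq\R\times\{0\}$. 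Identifying that isolating every point through integer fibers is exactly what reconciles the two requirements is the crux of the argument; once that is in place, all verifications are routine.
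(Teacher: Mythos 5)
Your proposal is correct, but it is a genuinely different example from the paper's, and the difference is worth spelling out. The paper takes $X=(\mathbb{Q}\times\{0,2\})\cup(\R\times\{1\})$ with the subspace topology inherited from the lexicographic product $\R\times 3$: there the copy of $\R$ (namely $\R\times\{1\}$) again consists of isolated points, but a countable set $\mathbb{Q}\times\{0,2\}$ of non-isolated points survives, so the countable-closure property requires the (one-line) observation that every limit point of any subset lies in that countable set. Your space $\R\times\mathbb{Z}$ is instead completely discrete --- your argument via immediate predecessors and successors is correct --- which makes every verification trivial. The key structural difference is that the paper's space, in its given order, is a GO space that is \emph{not} a LOTS: for irrational $r$ the point $\langle r,1\rangle$ is isolated in the subspace topology but has neither an immediate predecessor nor an immediate successor in $X$, hence is not isolated in the order topology. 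Your space, by contrast, \emph{is} a LOTS by construction. This is not a cosmetic point: the Observation is offered in the paper precisely as a counterexample ``if we consider GO spaces,'' and the LOTS case is then posed as open in Question \ref{ques:order-isomorphic}(a). Your discrete LOTS satisfies the hypotheses of that question (a LOTS in which every countable subset has countable closure) and violates its conclusion, so taken at face value your example does not merely prove the Observation --- it answers Question \ref{ques:order-isomorphic}(a) in the negative. You should state this consequence explicitly rather than present the example only as a proof of the Observation: either Question \ref{ques:order-isomorphic}(a) as literally formulated is trivially false, or it is intended with an implicit additional hypothesis such as countable extent (as in part (b) of that question), which neither your example nor the paper's possesses, both containing uncountable closed discrete subsets.
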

\begin{proof}
 Let $X=(\mathbb{Q}\times\{0,2\})\cup(\R\times\{1\})$ with the subspace topology of $\mathbb{R}\times 3$ with the lexicographic order.
\end{proof}

In \cite[Question 4.7]{tka_wilson-GO_fc} it was asked if every monotonically normal space of countable extent with the property that the closure of every countable subset is countable, is functionally countable. Recall that GO spaces are monotonically normal (\cite[Corollary 5.6]{heath-lutzer-zenor}). However, the example of Observation \ref{obs:counterexample} does not have countable extent. As observed in \cite[Proposition 3.1]{tka_wilson-GO_fc}, every functionally countable GO space has countable extent. Thus, we may ask the following.

\begin{ques}\label{ques:order-isomorphic}
 Let $X$ be a GO space where every countable subset has its closure countable. Does it follow that $X$ contains no order-isomorphic copies of uncountable subsets of $\mathbb{R}$ if either
 \begin{enumerate}[label=(\alph*)]
  \item $X$ is a LOTS or
  \item $X$ has countable extent?
 \end{enumerate}
\end{ques}

Notice that at the end of \cite[Observation 3.11]{tka_wilson-GO_fc}, the authors of that paper make a question that is similar to Question \ref{ques:order-isomorphic}.

By looking at the proof of Tkachuk's Theorem 3.13 from \cite{tka-corson_funct_ctble}, one can notice that some special kinds of isolated points of LOTS have an important role in the proof. It turns out that we will also have to juggle with this kind of points, so we give them a name as follows.

\begin{defi}\label{defi:lonely}
 Let $\langle X,<\rangle$ a linearly ordered set and $x\in X$. We will say that $x$ is \emph{lonely} if any of the following hold:
 \begin{enumerate}
  \item $x=\min X$ and there is $a\in X$ with $x<a$, $(x,a)=\emptyset$, $(a,\rightarrow)\neq\emptyset$ and $a=\inf(a,\rightarrow)$,
  \item $x=\max X$ and there is $a\in X$ with $a<x$, $(a,x)=\emptyset$, $(\leftarrow,a)\neq\emptyset$ and $a=\sup(\leftarrow,a)$, or
  \item there are $a,b\in X$ such that $a<x<b$, $(a,x)=\emptyset=(x,b)$, $(\leftarrow,a)\neq\emptyset\neq(b,\rightarrow)$, $a=\sup(\leftarrow,a)$ and $b=\inf(b,\rightarrow)$.
 \end{enumerate}
\end{defi}

Notice that every lonely point in a LOTS is an isolated point but not the other way around since there may be consecutive isolated points.

\begin{thm}\label{thm:Suslin}
 Let $X$ be an uncountable LOTS such that $X^2\setminus\Delta_X$ is functionally countable. Then there is a Souslin line $Y\subset X$ and a continuous function $f\colon X\to Y$ with convex fibers of cardinality less than or equal to $2$. 
\end{thm}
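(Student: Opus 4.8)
The plan is to obtain $Y$ by deleting from $X$ precisely its lonely points and collapsing each deleted point onto an adjacent surviving point. By Theorem \ref{thm:Aronszajn} I may assume that $X$ is an Aronszajn line. Let $L\subset X$ be the set of lonely points (Definition \ref{defi:lonely}) and set $Y=X\setminus L$. Since every lonely point is isolated, $L$ is open, so $Y$ is closed in $X$; hence $Y$ is itself a LOTS and its subspace topology coincides with its order topology. I would then define $f\colon X\to Y$ to be the identity on $Y$ and to send each lonely point to its left neighbour (in the endpoint cases (1) and (2) of Definition \ref{defi:lonely}, to its unique neighbour). The first point to verify is that $f$ has convex fibers of size $\le 2$: each fiber is $\{a\}$ or $\{a,x\}$ with $x\in L$ adjacent to $a$, hence convex, and no $a\in Y$ can absorb two lonely points, because a point with a lonely neighbour on each side would have a jump on both sides and thus be isolated, contradicting that the neighbours of a lonely point are non-isolated.

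The second step is a structural analysis of the isolated points of $X$, needed for both continuity and cellularity. Using that $X^2\setminus\Delta_X$ is functionally countable, hence satisfies the DCCC, together with Lemma \ref{lemma:DCCC}, I would show that there are only countably many maximal runs of consecutive isolated points of length $\ge 2$: such runs are pairwise disjoint convex open sets with at least two points, each splitting as $I<_s J$, so an uncountable family of them would be setwise comparable and would yield an uncountable discrete family in $X^2\setminus\Delta_X$. Every isolated point not lying in such a run is lonely, so $X$ has only countably many non-lonely isolated points, and these are exactly the isolated points of $X$ that survive in $Y$. Moreover, no point $a\in Y$ that is non-isolated in $X$ becomes isolated in $Y$: if $a$ is, say, a left limit, the right neighbours of the lonely points below $a$ are non-isolated, survive in $Y$, and remain cofinal below $a$, so surviving points still approach $a$. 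The same observation gives continuity of $f$, which is monotone: at any limit point the surviving points are cofinal/coinitial, so preimages of order-open rays are open.

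For the countable cellularity of $Y$ I would argue by contradiction. As $f$ is a monotone continuous surjection, the preimage of a convex open set is convex and open; hence an uncountable pairwise disjoint family of convex open subsets of $Y$, each with at least two points, pulls back to an uncountable pairwise disjoint family of convex open subsets of $X$, each with at least two points. Those preimages having exactly two points equal $\{c,d\}$ with $c,d$ isolated points of $X$ lying in $Y$, of which there are only countably many; so uncountably many preimages have at least three points and therefore split as $I<_s J$. Since disjoint convex sets are setwise comparable, Lemma \ref{lemma:DCCC} applies and produces an uncountable discrete family in $X^2\setminus\Delta_X$, contradicting the DCCC. Thus $Y$ is ccc.

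Finally, $Y$ is uncountable, since all of the uncountably many non-isolated points of $X$ survive, and as a suborder of the Aronszajn line $X$ it contains no copy of $\omega_1$, $\omega_1^\ast$, or of an uncountable subset of $\R$; hence $Y$ is again an Aronszajn line and in particular not separable, because an uncountable separable LOTS contains an uncountable subset order-embeddable in $\R$. Therefore $Y$ is ccc and non-separable, i.e.\ a Souslin line, and $f$ is the desired retraction with convex fibers of cardinality $\le 2$. I expect the main obstacle to be the bookkeeping around isolated points: identifying exactly which points are lonely, checking that collapsing them keeps all fibers of size $\le 2$ and creates no new isolated points in $Y$, and verifying continuity of $f$ at limits of lonely points; once this is in place, everything else reduces to the two applications of Lemma \ref{lemma:DCCC}.
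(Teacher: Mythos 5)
Your construction of $Y$ and $f$ is exactly the paper's (delete the lonely points of Definition \ref{defi:lonely}, collapse each onto an adjacent surviving point), and your proof is correct, but the two hard steps are carried out by genuinely different arguments. For cellularity, the paper applies Lemma \ref{lemma:DCCC} \emph{inside} $Y$ to get an uncountable discrete family in $Y^2\setminus\Delta_Y$, and then must pull that family back along $g=(f\times f)\restriction (X^2\setminus\Delta_X)$ and re-verify discreteness point by point, the delicate case being pairs $\pair{x_0,x_1}$ with $f(x_0)=f(x_1)$, where isolatedness of the lonely coordinate is used to build a special neighbourhood. You avoid this entirely: since $f$ is monotone and continuous, you pull back the cellular family itself, obtaining disjoint open convex subsets of $X$, and apply Lemma \ref{lemma:DCCC} once, directly in $X$. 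The price is that you must know in advance that $Y$ has only countably many isolated points; you get this from a first application of Lemma \ref{lemma:DCCC} to the maximal runs of consecutive isolated points of $X$ (length-one runs being exactly the lonely points, and no new isolated points appearing in $Y$), which plays the role of the paper's Claim about the set $A$ of isolated points of $Y$ and its $\sim$-classes. Your version trades the pointwise discreteness verification for a purely order-theoretic count, which is arguably cleaner. For non-separability, the paper notes $Y$ is a continuous image of the functionally countable space $X$ and quotes \cite[Theorem 3.10]{tka_wilson-GO_fc}; you instead invoke Theorem \ref{thm:Aronszajn} together with the classical fact that an uncountable separable LOTS contains an uncountable suborder embeddable in $\R$ --- also valid, though that fact (provable by mapping each point to the initial segment of a countable dense set it determines, a map whose fibers have at most two points, and then applying Cantor's embedding criterion) is asserted rather than proved, whereas the paper's route stays within cited results. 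In a final write-up you should also spell out two small steps you gloss over: that each run of consecutive isolated points is countable (so countably many runs give countably many non-lonely isolated points), and that the non-isolated points of $X$ are indeed uncountable (lonely points inject into their non-isolated neighbours, so otherwise $X$ itself would be countable).
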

\begin{proof}
 Let $U$ be the set of lonely points of $X$, as defined in Definition \ref{defi:lonely}, and let $Y=X\setminus U$. Notice that since $Y$ is a closed subspace of $X$, it is a LOTS with the subspace topology. We define $f\colon X\to Y$ in the following way.
 \begin{enumerate}[label=(\roman*)]
  \item If $x\in Y$, let $f(x)=x$.
  \item If $x=\max X$ and $x\in U$, let $f(x)$ be the immediate predecesor of $x$.
  \item If $x\in U$ and $x\neq\max X$, let $f(x)$ be the immediate succesor of $x$.
 \end{enumerate}
 It should be clear that $f$ is continuous and that the fibers of $f$ are convex with cardinality less than or equal to $2$.
 
 First, notice that $Y$ is uncountable. If $U$ is countable, then this is clear. Otherwise, since $f\restriction U$ is clearly injective, $f[U]$ is an uncountable subset of $Y$.

 Next, let us prove that $Y$ is not separable. By \cite[Proposition 3.2]{tka-corson_funct_ctble}, $X$ is functionally countable. Then by \cite[Proposition 3.1(a)]{tka-corson_funct_ctble} the continuous image $Y$ of $X$ is functionally countable. If $D\subset Y$ is countable, by \cite[Theorem 3.10]{tka_wilson-GO_fc} we conclude that $\overline{D}$ is countable so $D$ is not dense in $Y$.
 
 Now, let us prove that $c(Y)=\omega$ by assuming the opossite.  We would like to apply Lemma \ref{lemma:DCCC} to conclude that $Y^2\setminus\Delta_Y$ does not safisfy the DCCC.
 \begin{claim*} There is a pairwise disjoint, $\omega_1$-size collection $\mathcal{V}$ of open convex subsets of $Y$, each of cardinality at least $2$.
 \end{claim*}

 Define $A=\{x\in Y\colon \{x\}\textrm{ is open}\}$. Assume first that $A$ is countable. Let $\mathcal{U}$ be a collection of $\omega_1$-many open nonempty intervals of $Y$ that are pairwise disjoint. Then there are only countably many elements of $\mathcal{U}$ that intersect $A$; let $\mathcal{V}$ be the collection of all elements of $\mathcal{U}$ that do not intersect $A$. Then $\mathcal{V}$ is of cardinality $\omega_1$ and each of its elements is infinite. Thus, $\mathcal{V}$ satisfies the conditions in the claim.
 
 Next, consider the case when $A$ is uncountable. Given $a,b\in A$ we define $a\sim b$ if either $a=b$ or $[\min\{a,b\},\max\{a,b\}]$ is finite. It is easily seen that $\sim$ is an equivalence relation on $A$; let $\mathcal{E}=A/{\sim}$ be the set of equivalence classes. Moreover, every element of $\mathcal{E}$ is convex. By the definition of $Y$ it can be shown that every element of $\mathcal{E}$ is of cardinality at least $2$. It can also be easily shown that every element of $\mathcal{E}$ is order isomorphic to one of the following ordered sets:
 \begin{enumerate}[label=(\arabic*)]
  \item a natural number greater than or equal to $2$,
  \item the set of natural numbers $\omega$,
  \item the linearly ordered set $\omega^\ast$, which is the reverse order of $\omega$, or
  \item the integers $\mathbb{Z}$.
 \end{enumerate}
 Thus, every element of $\mathcal{E}$ is a countable open set with at least two points. Since $A$ is uncountable, $\mathcal{E}$ is uncountable as well. Thus, we may take $\mathcal{V}$ to be any subcollection of $\mathcal{E}$ of size $\omega_1$ and the conditions in the claim are satisfied.\vskip10pt
 Thus, we may write $\mathcal{V}=\{V_\alpha\colon\alpha<\omega_1\}$ and for each $\alpha<\omega_1$, $I_\alpha$ and $J_\alpha$ are nonempty open subsets with $I_\alpha\cup J_\alpha\subset V_\alpha$ and $I_\alpha<_s J_\alpha$. By Lemma \ref{lemma:DCCC} we conclude that $\{I_\alpha\times J_\alpha\colon\alpha<\omega_1\}$ is a discrete family in $Y^2\setminus\Delta_Y$.
 
 Let us define a function $g\colon X^2\setminus\Delta_X\to Y^2$ by $g(\langle x,y\rangle)=\langle f(x),f(y)\rangle$ for all $\langle x,y\rangle\in X^2\setminus\Delta$. We will prove that $\{g^{\leftarrow}[I_\alpha\times J_\alpha]\colon\alpha<\omega_1\}$ is a discrete family in $X^2\setminus\Delta_X$. Let $\langle x_0,x_1\rangle\in X^2\setminus\Delta_X$. 
 
 If $f(x_0)\neq f(x_1)$ we know that $g(\langle x_0,x_1\rangle)=\langle f(x_0),f(x_1)\rangle\in Y^2\setminus\Delta_Y$ so there exists an open set $W\subset Y^2\setminus\Delta_Y$ with $g(\langle x_0,x_1\rangle)\in W$ and $W$ intersects at most one element of $\{I_\alpha\times J_\alpha\colon\alpha<\omega_1\}$. Thus, $g^\leftarrow[W]$ is an open subset of $X^2\setminus\Delta_X$ with $\langle x_0,x_1\rangle\in g^\leftarrow[W]$ and such that $g^\leftarrow[W]$ intersects at most one element of $\{g^{\leftarrow}[I_\alpha\times J_\alpha]\colon\alpha<\omega_1\}$.
 
 Now, assume that $f(x_0)=f(x_1)$. Then, by the definition of $Y$ and $f$, one of $x_0$ or $x_1$ must be a lonely point. We will assume that $x_0<x_1$, that $x_1\neq\max X$, and thus, that $x_0$ is lonely; the other cases can be treated in a similar way. Since $x_0$ is isolated, $W=\{x_0\}\times(x_0,\rightarrow)$ is an open set such that $\langle x_0,x_1\rangle\in W$. If $\alpha<\omega_1$ notice that $W\cap g^{\leftarrow}[I_\alpha\times J_\alpha]\neq\emptyset$ is equivalent to $W\cap (f^\leftarrow[I_\alpha]\times f^\leftarrow[J_\alpha])\neq\emptyset$, which implies that $x_0\in f^\leftarrow[I_\alpha]$; that is, $f(x_0)\in I_\alpha$. Since the elements of $\mathcal{V}$ are pairwise disjoint, this can only happen for at most one value of $\alpha$. That is, there is at most one element of $\{g^{\leftarrow}[I_\alpha\times J_\alpha]\colon\alpha<\omega_1\}$ that intersects $W$. 
 
 Thus, we have proved that $\{g^{\leftarrow}[I_\alpha\times J_\alpha]\colon\alpha<\omega_1\}$ is an uncountable discrete family of $X^2\setminus\Delta_X$, which is impossible since $X^2\setminus\Delta$ is assumed to be functionally countable. From this contradiction, we conclude that $c(Y)=\omega$ and thus, $Y$ is a Souslin line.
\end{proof}

By joining the statements of Theorems \ref{thm:Aronszajn} and \ref{thm:Suslin} we conclude the following.

\begin{thm}\label{thm:main}
 Assume that $X$ is an uncountable LOTS with $X^2\setminus\Delta_X$ functionally countable. Then
 \begin{itemize}
  \item $X$ is an Aronszajn line,
  \item $Y=X\setminus\{x\in X\colon x\textrm{ is lonely}\}$ is a Souslin line, and
  \item there is a retraction $f\colon X\to Y$ with convex fibers of cardinality less than or equal to $2$.
 \end{itemize}
\end{thm}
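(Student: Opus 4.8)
The plan is to assemble the theorem from the two results already established, since each bullet point corresponds to part of a previous statement. The first bullet is immediate: Theorem~\ref{thm:Aronszajn} asserts precisely that an uncountable LOTS $X$ with $X^2\setminus\Delta_X$ functionally countable is an Aronszajn line, so I would simply invoke it.

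For the second and third bullets I would appeal to Theorem~\ref{thm:Suslin}, which produces a Souslin line $Y\subset X$ together with a continuous $f\colon X\to Y$ having convex fibers of cardinality at most $2$. The one point that requires attention is matching the set $Y$ of that theorem with the set $X\setminus\{x\in X\colon x\text{ is lonely}\}$ in the present statement. This is not an extra argument but merely a matter of reading the construction: in the proof of Theorem~\ref{thm:Suslin} the space $Y$ is defined to be exactly $X\setminus U$, where $U$ is the set of lonely points of $X$. Hence the $Y$ of Theorem~\ref{thm:Suslin} is literally the set named in the second bullet, and it is a Souslin line.

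It remains to upgrade the continuous map $f$ to a retraction. Recall that a retraction onto $Y$ is a continuous map $f\colon X\to Y$ whose restriction to $Y$ is the identity. I would read off this property directly from the definition of $f$ in the proof of Theorem~\ref{thm:Suslin}: clause (i) of that definition sets $f(x)=x$ for every $x\in Y$, so $f\restriction Y=\mathsf{id}_Y$, while continuity and the bound on the fibers were already verified there. Thus $f$ is the desired retraction, and all three bullets are accounted for.

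The main (and essentially only) obstacle here is bookkeeping rather than mathematics: one must confirm that the object called $Y$ in Theorem~\ref{thm:Suslin} coincides with the one displayed in this statement, and that the map which Theorem~\ref{thm:Suslin} calls merely \emph{continuous} is in fact a retraction, this last fact being visible from the explicit formula defining it. No new estimate or construction is required.
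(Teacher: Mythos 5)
Your proposal is correct and takes essentially the same route as the paper, which derives Theorem~\ref{thm:main} simply by ``joining the statements'' of Theorems~\ref{thm:Aronszajn} and~\ref{thm:Suslin}. Your additional bookkeeping---confirming that the $Y$ of Theorem~\ref{thm:Suslin} is literally $X\setminus\{x\in X\colon x\textrm{ is lonely}\}$ and that $f\restriction Y=\mathsf{id}_Y$ by clause (i) of its definition---is exactly the reading of that proof which the paper leaves implicit, since these facts appear in the proof of Theorem~\ref{thm:Suslin} but not in its statement.
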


Thus, in any model of \ZFC{} without Souslin lines we obtain an answer to Tkachuk's question \ref{ques-tka-1}.

\begin{coro}
 Assume that there are no Souslin lines. Let $X$ be a LOTS such that $X^2\setminus\Delta_X$ is functionally countable. Then $X$ is countable.
\end{coro}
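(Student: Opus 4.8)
The plan is to derive this immediately from Theorem \ref{thm:main} by a one-step argument by contradiction, so essentially no new work is required. First I would suppose, toward a contradiction, that $X$ is uncountable. Then $X$ is an uncountable LOTS whose square minus the diagonal, $X^2\setminus\Delta_X$, is functionally countable; these are precisely the hypotheses of Theorem \ref{thm:main}.

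Next I would apply the second bullet of Theorem \ref{thm:main}, which tells us that the set $Y=X\setminus\{x\in X\colon x\textrm{ is lonely}\}$ is a Souslin line. This directly contradicts the standing assumption that there are no Souslin lines. Hence no uncountable $X$ can satisfy the hypotheses, and therefore $X$ must be countable, as claimed.

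Since all of the substantive content has already been established in the proofs of Theorems \ref{thm:Aronszajn} and \ref{thm:Suslin} (packaged together as Theorem \ref{thm:main}), I do not expect any genuine obstacle at this stage: the corollary is a one-line consequence. The only point worth verifying explicitly is that the hypothesis of the corollary (``$X$ is a LOTS with $X^2\setminus\Delta_X$ functionally countable'') together with the negation of the conclusion (``$X$ is uncountable'') matches exactly the hypotheses of Theorem \ref{thm:main}, which it does. One could equally invoke only Theorem \ref{thm:Suslin}, whose conclusion already produces a Souslin line $Y\subset X$ under the same assumptions, and reach the same contradiction; either route yields the result with no remaining difficulty.
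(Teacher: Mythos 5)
Your proposal is correct and is exactly the argument the paper intends: the corollary is stated immediately after Theorem \ref{thm:main} as a direct consequence, and the contrapositive you give (an uncountable $X$ would yield the Souslin line $Y=X\setminus\{x\in X\colon x\textrm{ is lonely}\}$, contradicting the hypothesis) is the whole content. Your remark that invoking Theorem \ref{thm:Suslin} alone suffices is also accurate, since that theorem already produces the Souslin line under the same hypotheses.
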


\section{Functionally countable Souslin lines}

If we are looking for counterexamples to Tkachuk's Question \ref{ques-tka-1}, according to Theorem \ref{thm:main}, one possibility is that such counterexample is a densely ordered Souslin line. Thus, in this section we tackle the question of whether it is consistent that there is a Souslin line $L$ such that $L^2\setminus\Delta_L$ is functionally countable. We were unable to solve this question, but we will be proving some related results.

In \cite[Proposition 3.2]{tka-corson_funct_ctble} it was proved that if a space $X$ is such that $X^2\setminus\Delta_X$ is functionally countable, then $X$ is functionally countable. 

\begin{obs}
If there is a Souslin line, there is a Souslin line $X$ that is not functionally countable and, thus, $X^2\setminus\Delta_X$ is not functionally countable.
\end{obs}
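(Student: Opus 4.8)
The plan is to produce a single Souslin line that is manifestly not functionally countable, and then invoke the already-quoted implication from \cite[Proposition 3.2]{tka-corson_funct_ctble} in its contrapositive form. Recall that if $X^2\setminus\Delta_X$ is functionally countable then $X$ is functionally countable; hence it suffices to exhibit, under the assumption that some Souslin line exists, a Souslin line $X$ that fails to be functionally countable, and the statement about $X^2\setminus\Delta_X$ will follow at once.

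The first step is to pass from an arbitrary Souslin line to one whose order topology is \emph{connected}. For a LOTS this means I want a Souslin line $X$ that is densely ordered, has no endpoints, and is Dedekind complete, since a linearly ordered set is connected in its order topology precisely when it is densely ordered and has no gaps. This reduction is standard: starting from a given Souslin line, one discards isolated points, collapses jumps, and passes to the Dedekind completion, checking that countable cellularity and non-separability survive each of these operations. I would cite \cite{kunen} for this passage rather than reproduce it, as it is the usual construction of a connected Souslin line.

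With such an $X$ in hand, the second step is short. Since $X$ is connected and has at least two points, fix $a<b$ in $X$. Being a LOTS, $X$ is normal, so by Urysohn's Lemma there is a continuous $f\colon X\to[0,1]$ with $f(a)=0$ and $f(b)=1$. As $X$ is connected, $f[X]$ is a connected subset of $[0,1]$ containing both $0$ and $1$, whence $f[X]=[0,1]$ is uncountable. Therefore $X$ is not functionally countable, and by the contrapositive of \cite[Proposition 3.2]{tka-corson_funct_ctble}, neither is $X^2\setminus\Delta_X$.

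The only real content lies in the first step, which is also where I expect the main obstacle to be: one must verify that turning a given Souslin line into a dense, complete, endpoint-free one does not destroy either countable cellularity or non-separability. Cellularity is easy, since both a dense suborder and a Dedekind completion reflect disjoint families of open sets back to the original line; the delicate point is that non-separability persists after collapsing jumps and completing, and here the density of $X$ inside its completion is what makes the argument go through. Everything after that, namely the Urysohn function together with the connectedness argument, is routine.
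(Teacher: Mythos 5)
Your proposal is correct and takes essentially the same route as the paper: pass to a connected Souslin line (the paper cites \cite[Exercise III.5.33]{kunen} for this reduction, which you sketch rather than cite precisely), then use Urysohn's lemma and connectedness to obtain a continuous surjection onto $[0,1]$, and finish with the contrapositive of \cite[Proposition 3.2]{tka-corson_funct_ctble}. The paper leaves that last step implicit in the word ``thus,'' but otherwise the two arguments coincide.
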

\begin{proof}
By \cite[Exercise III.5.33]{kunen} we may assume that there is a Souslin line $X$ that is connected with the order topology.    Let $a,b\in X$ be such that $a<b$. By Urysohn's lemma there exists a continuous function $f\colon X\to[0,1]$ such that $f(a)=0$ and $f(b)=1$. Since $X$ is connected, the image of $f$ is a connected subset of $[0,1]$ containing both endpoints $0$ and $1$. Thus, $f$ is surjective and $X$ is not functionally countable. 
\end{proof}

Next, we prove that the existence of a Souslin line implies the existence of a functionally countable Souslin line. In order to prove this, we first need to understand more about real-valued continuous functions defined on Souslin lines. Recall there is a well-known result that says that the real-valued continuous functions in a separable space are determined once we know their value on a countable dense subset. In Proposition \ref{propo:maps_ctble_cell} below, we present a generalization of this for densely ordered LOTS with countable cellularity. Before, we establish two elementary facts which we were unable to find in the literature.

\begin{lemma}\label{lemma:cel_w_first_ctble}
 If $X$ is a LOTS with $c(X)=\omega$, then $X$ is first countable.
\end{lemma}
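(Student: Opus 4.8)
The plan is to prove the statement in contrapositive spirit: I will show that a failure of first countability at a point forces a strictly monotone $\omega_1$-sequence, and that any such sequence produces an uncountable pairwise disjoint family of open sets, contradicting $c(X)=\omega$.

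First I would recall the shape of neighbourhood bases in a LOTS. A point $x\in X$ that is not an endpoint has a local base consisting of intervals $(a,b)$ with $a<x<b$; hence $x$ has a countable local base if and only if the set $(\leftarrow,x)$ has countable cofinality (or $x$ has an immediate predecessor) and, symmetrically, $(x,\rightarrow)$ has countable coinitiality (or $x$ has an immediate successor), the endpoint cases being easier. Consequently, if $X$ is not first countable at some $x$, then one side fails, say $(\leftarrow,x)$ has uncountable cofinality. Truncating a cofinal increasing transfinite sequence I obtain a strictly increasing family $\{x_\alpha:\alpha<\omega_1\}$ with $x_\alpha<x$ for all $\alpha$. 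The case where $(x,\rightarrow)$ has uncountable coinitiality is symmetric, producing a strictly decreasing $\omega_1$-sequence and an entirely analogous argument.

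The heart of the proof is then a dichotomy applied to consecutive terms $x_\alpha<x_{\alpha+1}$. Call $\alpha$ \emph{good} if $(x_\alpha,x_{\alpha+1})\neq\emptyset$ and \emph{bad} otherwise. If uncountably many $\alpha$ are good, the intervals $(x_\alpha,x_{\alpha+1})$ with $\alpha$ good are nonempty, open, and pairwise disjoint, since the $x_\beta$ increase, giving an uncountable cellular family at once. Otherwise the good indices are countable, so the set $S=\{\alpha<\omega_1:\alpha\text{ and }\alpha+1\text{ are both bad}\}$ is co-countable, hence uncountable. For $\alpha\in S$ the point $x_{\alpha+1}$ has $x_\alpha$ as immediate predecessor and $x_{\alpha+2}$ as immediate successor, so $\{x_{\alpha+1}\}$ is open; as these singletons are distinct, $\{\{x_{\alpha+1}\}:\alpha\in S\}$ is again an uncountable pairwise disjoint family of open sets. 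Either way $c(X)\neq\omega$, the desired contradiction.

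The main obstacle to watch is precisely the bad case: empty gaps mean that consecutive terms need not bound any open interval, so one cannot naively separate them. The trick is to look one step further and isolate $x_{\alpha+1}$ from both sides, which is why the indexing must keep successors of indices available; checking that $S$ remains uncountable after discarding the countably many good indices and their predecessors is the only genuinely combinatorial point. I would also remark that an alternative route is to conclude, via the same cellularity computation, that $X$ contains no ordered copy of $\omega_1$ or $\omega_1^\ast$ and then invoke Lemma \ref{lemma:first-countable-Dedekind-complete}; however, that lemma is stated for the Dedekind completion, and transferring first countability back to $X$ requires comparing the order and subspace topologies, so the direct argument above is cleaner and self-contained.
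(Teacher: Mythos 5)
Your proof is correct, and it follows the same contrapositive skeleton as the paper (a failure of first countability yields a strictly monotone $\omega_1$-sequence, which in turn yields an uncountable cellular family), but both steps are executed differently. For the first step, the paper simply cites Lemma \ref{lemma:first-countable-Dedekind-complete} to pass from non-first-countability to an ordered copy of $\omega_1$ or $\omega_1^\ast$, whereas you derive the monotone $\omega_1$-sequence directly from one-sided uncountable cofinality; your version is self-contained and, as you note, sidesteps the (well-known but unproved) transfer of first countability between $X$ and its Dedekind completion. For the second step, your good/bad dichotomy is correct but unnecessary: the paper disjointifies by taking intervals that span \emph{two} consecutive terms of the sequence, $U_\alpha=(x_\beta,x_{\beta+2})$ (with $\beta$ ranging over the limit parts of the indices), so that nonemptiness is witnessed by the midpoint $x_{\beta+1}$ no matter whether the individual gaps $(x_\gamma,x_{\gamma+1})$ are empty; this handles your ``bad'' case and your ``good'' case uniformly, in one line, and is the same trick used in Proposition \ref{propo:no_copies_omega1} (there with three-step intervals, since two disjoint open subsets are needed). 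In short: your argument buys independence from Lemma \ref{lemma:first-countable-Dedekind-complete} at the cost of a case analysis that the midpoint trick makes avoidable.
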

\begin{proof}
 We prove the contrapositive implication. Assume that $X$ is not first countable. By Lemma \ref{lemma:first-countable-Dedekind-complete}, we may assume that there exists $\{x_\alpha\colon \alpha<\omega_1\}\subset X$ such that $\alpha<\beta<\omega_1$ implies $x_\alpha<x_\beta$. For $\alpha<\omega_1$, there are unique ordinals $\beta<\omega_1$ that is a limit or $0$ and $n<\omega$ such that $\alpha=\beta+n$; let $U_{\alpha}=(x_{\beta},x_{\beta+2})$. Then $\{U_\alpha\colon \alpha<\omega_1\}$ is an uncountable, pairwise disjoint collection of nonempty open subsets of $X$. Thus, $c(X)>\omega$.
\end{proof}

\begin{lemma}\label{lemma:convex_open_union_intervals}
 Let $X$ be a densely ordered LOTS with no endpoints such that $c(X)=\omega$. Then for every nonempty convex open set $U$ there exists $\{a_n,b_n\colon n\in\omega\}\subset U$ such that $a_n<b_n$ for each $n\in\omega$ and $U=\bigcup_{n\in\omega}(a_n,b_n)$.
\end{lemma}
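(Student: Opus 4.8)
The plan is to exploit the fact that $U$, being open in $X$, inherits countable cellularity, and to convert this into \emph{countable cofinality and coinitiality} of $U$ as a linear order; once this is done the desired intervals can be read off directly.

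First I would record the structural features of $U$. Being convex in the densely ordered set $X$, $U$ is itself densely ordered, and I claim it has neither a least nor a greatest element. Indeed, if $x$ were the minimum of $U$, then openness would give an interval neighborhood of $x$ contained in $U$; since $X$ has no endpoints and is densely ordered, such a neighborhood necessarily contains points of $U$ strictly below $x$, contradicting $x=\min U$ (the argument for a maximum is dual). In particular every point of $U$ lies strictly between two points of $U$, so by convexity $U=\bigcup\{(a,b)\colon a,b\in U,\ a<b\}$; the only remaining task is to extract a countable subfamily.

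The key step is to show $\operatorname{cf}(U)=\omega$ and, dually, that $U$ has countable coinitiality. I would argue the contrapositive: if $U$ had uncountable cofinality, then by transfinite recursion (at each stage a countable initial segment fails to be cofinal, so a strict upper bound in $U$ exists) one builds a strictly increasing sequence $\langle x_\alpha\colon\alpha<\omega_1\rangle$ in $U$. By density each interval $(x_\alpha,x_{\alpha+1})$ is nonempty, and these intervals are pairwise disjoint, yielding an uncountable cellular family inside $U$, hence inside $X$. This contradicts $c(X)=\omega$, noting that $c(U)=\omega$ because $U$ is open in $X$. The coinitiality statement is entirely symmetric.

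Finally I would assemble the cover. Using countable cofinality together with the absence of a maximum, choose a strictly increasing cofinal sequence $\langle b_n\colon n\in\omega\rangle$ in $U$, so that every $x\in U$ satisfies $x<b_m$ for some $m$; dually choose a strictly decreasing coinitial sequence $\langle a_n\colon n\in\omega\rangle$. Fixing any $u\in U$, for all sufficiently large $n$ one has $a_n<u<b_n$, so after discarding finitely many indices we may assume $a_n<b_n$ for every $n$, and then $(a_n,b_n)\subseteq U$ by convexity. For an arbitrary $x\in U$, coinitiality and cofinality provide indices with $a_i<x<b_j$, and monotonicity lets us find a single $n$ with $a_n<x<b_n$, so $x\in(a_n,b_n)$. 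Hence $U=\bigcup_{n\in\omega}(a_n,b_n)$ with all $a_n,b_n\in U$, as required. The one place requiring genuine care is the cellularity-to-cofinality reduction of the third paragraph, which is where the hypothesis $c(X)=\omega$ is really used; the absence of endpoints and the final indexing are routine bookkeeping.
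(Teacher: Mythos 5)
Your proof is correct, but it takes a different route from the paper's. The paper passes to the Dedekind completion $Y$ of $X$, sets $p=\inf U$ and $q=\sup U$ in $Y$, shows $p,q\notin U$, and then invokes Lemma \ref{lemma:cel_w_first_ctble} (a LOTS of countable cellularity is first countable, applied to $Y$) to get countable local bases at $p$ and $q$; from these it recursively interleaves points of $U$ and of $X$ to produce the $a_n,b_n$. You never leave $X$: after checking that $U$ has no least or greatest element, you prove directly that $U$ has countable cofinality and coinitiality, by turning a hypothetical $\omega_1$-increasing chain into an uncountable pairwise disjoint family of nonempty open intervals, contradicting $c(X)=\omega$; monotone cofinal and coinitial sequences then assemble the cover. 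The two arguments are morally equivalent --- first countability of the completion at $\sup U$ and $\inf U$ is exactly countable cofinality and coinitiality of $U$, and the paper's Lemma \ref{lemma:cel_w_first_ctble} is itself proved by the same ``$\omega_1$-chain gives a cellular family'' trick (compare Proposition \ref{propo:no_copies_omega1}) --- but your version is self-contained and avoids the completion machinery entirely, which is arguably more elementary. What the paper's route buys is economy within the larger development: the completion and Lemma \ref{lemma:cel_w_first_ctble} are needed again later (e.g.\ in Theorem \ref{thm:funct_ctble_Suslin} and Lemma \ref{lemma:Tadequate}), so factoring the countability input through that lemma lets the same tool do double duty. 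One small point of care in your write-up: coinitiality gives $a_m\leq x$ rather than $a_m<x$, but since your sequences are strictly monotone this is harmless, exactly as you implicitly use.
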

\begin{proof}
To simplify the proof, let us assume that $X\subset Y$, where $Y$ is the Dedekind completion of $X$. By Lemma \ref{lemma:cel_w_first_ctble} and the fact that $X$ is dense in $Y$, $Y$ is first countable.

Let $U\subset X$ be nonempty open and convex.  Define $p=\inf U$ and $q=\sup U$; these two points exist in $Y$ but not necessarily in $X$. Let also $x_0\in U$. 

First, let us argue that $p,q\notin U$. If $p\in U$ then $p=\min{(U)}$. Since $X$ has no endpoints, by definition of the order topology in $X$ there exist $x,y\in X$ with $p\in (x,y)\cap X\subset U$. But since $X$ is densely ordered, $(x,p)\cap X$ is a nonempty subset of $U$, which is a contradiction. We arrive at a similar contradiction if we asssume that $q\in U$.

Let $\{B(p,n)\colon n\in\omega\}$ and $\{B(q,n)\colon n\in\omega\}$ be countable local bases at $p$ and $q$, respectively. Since $Y$ is densely ordered, we may recursively find $\{y_n\colon n\in\mathbb{Z}\}\subset (p,q)$ with the following properties:
\begin{enumerate}[label=(\alph*)]
 \item $y_0=x_0$,
 \item if $n<\omega$ then $y_{-(n+1)}\in (p,y_{-n})\cap U\cap B(p,n)$, and
 \item if $n<\omega$ then $y_{n+1}\in (y_{n},q)\cap U\cap B(q,n)$.
\end{enumerate}
Then, for each $n\in\omega$, since $X$ is topologically dense in $Y$, let $a_n\in(y_{-(n+1)},y_n)\cap X$ and $b_n\in(y_n,y_{n+1})\cap X$. It is not hard to see that $\{a_n,b_n\colon n\in\omega\}$ is as required.
\end{proof}

\begin{propo}\label{propo:maps_ctble_cell}
Let $X$ be a densely ordered LOTS with $c(X)=\omega$, let $M$ be a second countable regular space and let $f\colon X\to M$ be continuous. Then there exists a countable set $D\subset X$ such that $f$ is constant at every convex component of $X\setminus\overline{D}$. In particular,  $f\restriction (X\setminus\overline{D})$ attains countably many values.
\end{propo}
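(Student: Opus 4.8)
The plan is to reduce the statement to the following single property of a countable set $D\subseteq X$: for every $x<y$ in $X$ with $f(x)\neq f(y)$, the open interval $(x,y)$ already meets $D$. Granting this, if $C$ is any convex component of the open set $X\setminus\overline{D}$ and $u<v$ lie in $C$, then $[u,v]\subseteq C$ is disjoint from $\overline{D}$, so $(u,v)\cap D=\emptyset$ and hence $f(u)=f(v)$; thus $f$ is constant on $C$. The ``in particular'' clause is then automatic: the convex components of $X\setminus\overline{D}$ form a pairwise disjoint family of nonempty open sets, so by $c(X)=\omega$ there are only countably many of them, and $f$ takes one value on each, whence $f\restriction(X\setminus\overline{D})$ attains countably many values.

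To build such a $D$, I would fix a countable base $\{B_n:n\in\omega\}$ of $M$. For each $n$ the preimage $f^{-1}[B_n]$ is open, and its convex components form a pairwise disjoint family of nonempty open subsets of $X$, so by $c(X)=\omega$ there are only countably many of them. For each such component $C'$, let $q=\sup C'$ computed in the Dedekind completion $Y$ of $X$, and choose a sequence $\{b_k^{C'}:k\in\omega\}\subseteq C'$ with $b_k^{C'}<q$ for all $k$ and $\sup_k b_k^{C'}=q$. Such a cofinal sequence exists because $Y$ is first countable (Lemma \ref{lemma:cel_w_first_ctble} together with Lemma \ref{lemma:first-countable-Dedekind-complete}) and $X$ is densely ordered, so $C'$ has no largest element below $q$; alternatively one extracts it from the interval decomposition of $C'$ furnished by Lemma \ref{lemma:convex_open_union_intervals}, after removing the at most two endpoints of $X$ (which I would simply throw into $D$). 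Letting $D$ be the union of all these sequences, I obtain a countable union of countable sets, so $D$ is countable.

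To verify the displayed property, take $x<y$ with $f(x)\neq f(y)$. Since $M$ is $T_1$ and $\{B_n\}$ is a base, pick $B=B_n$ with $f(x)\in B$ and $f(y)\notin B$, and let $C'$ be the convex component of $f^{-1}[B]$ containing $x$. As $x$ is not the maximum of $X$ (because $y>x$), $C'$ contains points above $x$, so $x<\sup C'$; and since $y\notin f^{-1}[B]\supseteq C'$ while $C'$ is a convex set containing $x<y$, convexity forces $\sup C'\le y$. Choosing any $b_k^{C'}>x$ from the cofinal sequence (possible because $\sup_k b_k^{C'}=\sup C'>x$) gives $x<b_k^{C'}<\sup C'\le y$, so $b_k^{C'}\in(x,y)\cap D$, as needed. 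The point I expect to be the main obstacle, and the reason a naive ``insert all transition points into $D$'' strategy fails, is that $f$ may change value precisely across a \emph{gap} of $X$, i.e.\ at a point of $Y\setminus X$ that cannot be placed in $D$ at all, and the set of boundary points of the various $f^{-1}[B_n]$ may be uncountable. The construction sidesteps this entirely by never attempting to hit the transition point itself: it suffices to drop one point of $C'$ into $(x,y)$, and the inequality $\sup C'\le y$ guarantees that \emph{any} element of the cofinal sequence lying above $x$ automatically lies below $y$. The only remaining bookkeeping is the handling of the (at most two) endpoints of $X$ and the two uses of $c(X)=\omega$ — once to keep $D$ countable and once to count the components of $X\setminus\overline{D}$.
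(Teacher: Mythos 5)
Your proof is correct and takes essentially the same route as the paper's: fix a countable base of $M$, use $c(X)=\omega$ to see that the preimages of basic sets have countably many convex components in total, plant a countable set of marker points in each component, and conclude that two points with distinct $f$-values must be separated by a marker, so $f$ is constant on each convex component of $X\setminus\overline{D}$. The only differences are organizational: the paper plants both families of endpoints from the interval decomposition of Lemma \ref{lemma:convex_open_union_intervals} and argues by contradiction using two disjoint basic sets (Hausdorffness), whereas you plant a one-sided cofinal sequence toward $\sup C'$ (justified by the same first-countability-of-the-completion argument that underlies Lemma \ref{lemma:convex_open_union_intervals}) and verify an explicit separation property using a single basic set and $T_1$; both handle the endpoint technicality correctly, yours inline rather than by the paper's initial reduction to the no-endpoint case.
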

\begin{proof}
First, notice that if we prove the statement assuming that $X$ has no endpoints, then the statement holds in general. So, in order to simplify the proof, we assume that $X$ has no endpoints.

Let $\mathcal{B}$ be a countable base of $M$. For each $B\in\mathcal{B}$, let $\mathcal{U}_B$ be the set of convex components of $f^\leftarrow[B]$. Let $\mathcal{U}=\bigcup\{\mathcal{U}_B\colon B\in\mathcal{B}\}$; then $\mathcal{U}$ is a countable set of nonempty open and convex subsets of $X$. For each $U\in\mathcal{U}$, by Lemma \ref{lemma:convex_open_union_intervals} there exist $\{a^U_n,b^U_n\colon n<\omega\}\subset U$ such that $a^U_n<b^U_n$ for each $n<\omega$ and $U=\bigcup\{(a^U_n,b^U_n)\colon n<\omega\}$. Define $D=\{a^U_n,b^U_n\colon U\in\mathcal{U}, n<\omega\}$, clearly $D$ is a countable set.

So let $V$ be a convex component of $X\setminus\overline{D}$, we have to prove that $f\restriction V$ is constant. Assume this is not the case and let $p_0,p_1\in V$ such that $f(p)\neq f(q)$. Since $M$ is Hausdorff, there exists $B_0,B_1\in\mathcal{B}$ such that $f(p)\in B_0$, $f(q)\in B_1$ and $B_0\cap B_1=\emptyset$. For $i\in\{0,1\}$, let $U_i$ be the element of $\mathcal{U}_{B_i}$ such that $p_i\in U_i$. Clearly, $U_0\neq U_1$ and since both $U_0$ and $U_1$ are convex, we may assume without loss of generality that $U_0<_s U_1$. For $i\in\{0,1\}$ there exists $n(i)<\omega$ be such that $p_i\in (a^{U_i}_{n(i)},b^{U_i}_{n(i)})\subset U_i$. Then, $$a^{U_0}_{n(0)}<p_0<b^{U_0}_{n(0)}<a^{U_1}_{n(1)}<p_1<b^{U_1}_{n(1)}$$ and this implies that $(p_0,p_1)\cap D\neq\emptyset$. But since $V$ is convex, $(p_0,p_1)\subset V$, so we obtain a contradiction. Thus, we conclude that $f\restriction V$ is constant and the result is proved.
\end{proof}

\begin{ques}
 Can the condition that $X$ is densely ordered be removed from the hypothesis of Proposition \ref{propo:maps_ctble_cell}?
\end{ques}

Recall that a space $X$ is \emph{left-separated} if there is an infinite cardinal number $\kappa$ and an enumeration $X=\{x_\alpha\colon\alpha<\kappa\}$ such that for every $\lambda<\kappa$ the initial segment $\{x_\alpha\colon \alpha<\lambda\}$ is closed. If $X$ is a LOTS, it is known that $c(X)\leq d(X)\leq c(X)^+$ (see \cite{bennet_lutzer-encyclopedia}) and if $X$ is densely ordered it is not hard to see that $d(X)=w(X)$. 

\begin{thm}\label{thm:funct_ctble_Suslin}
If there is a Souslin line, then there exists a functionally countable Souslin line.
\end{thm}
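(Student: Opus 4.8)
The plan is to manufacture, from an arbitrary Souslin line, a densely ordered Souslin line $X$ in which every countable subset has countable closure, and then to read off functional countability of $X$ directly from Proposition \ref{propo:maps_ctble_cell}. First I would reduce to a convenient ambient space. Starting from any Souslin line, the existence of a connected Souslin line (as used in the Observation above, via \cite[Exercise III.5.33]{kunen}) yields a densely ordered, Dedekind complete Souslin line $S_1$. Let $O$ be the union of all separable open intervals of $S_1$; a maximal disjoint subfamily of these is countable and dense in $O$, so $O$ is separable and hence $O\neq S_1$. Passing to a convex component of $S_1\setminus\overline{O}$, I obtain a LOTS $S$ that is densely ordered, has no endpoints, satisfies $c(S)=\omega$, and is \emph{nowhere separable} (no nonempty open subset of $S$ is separable). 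The crucial consequence is that for every countable $C\subseteq S$ the closure $\overline{C}$ is nowhere dense: if $\overline{C}$ had nonempty interior $V$, then $C\cap V$ would be a countable dense subset of $V$, making $V$ separable.

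The heart of the argument is to build by transfinite recursion a set $X=\{x_\gamma\colon\gamma<\omega_1\}\subseteq S$ that is dense in $S$ and left-separated in type $\omega_1$. Since $S$ is a non-separable LOTS with $c(S)=\omega$, the inequalities $c(S)\le d(S)\le c(S)^+$ together with $d(S)=w(S)$ for densely ordered spaces give $w(S)=\omega_1$, so I fix a base $\{O_\gamma\colon\gamma<\omega_1\}$ of nonempty open sets of $S$. At stage $\gamma$, having chosen $X_\gamma=\{x_\beta\colon\beta<\gamma\}$, I observe that $\overline{X_\gamma}$ is nowhere dense, so $O_\gamma\setminus\overline{X_\gamma}$ is nonempty and open, and I choose $x_\gamma$ in it. Then $X$ meets every $O_\gamma$, hence is dense in $S$; consequently $c(X)=\omega$, $X$ is non-separable (a countable dense subset of $X$ would be dense in $S$), and $X$ is densely ordered, so by the remark in the Preliminaries $X$ is itself a LOTS and therefore a Souslin line. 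Moreover each initial segment is closed: if $\delta\geq\alpha$ then $x_\delta\notin\overline{X_\delta}\supseteq\overline{X_\alpha}$, so $X_\alpha$ has no limit point in $X\setminus X_\alpha$. Left-separation yields the closure property I want: if $D\subseteq X$ is countable and $\alpha=\sup\{\gamma\colon x_\gamma\in D\}<\omega_1$, then $D\subseteq X_{\alpha+1}$, and $X_{\alpha+1}$ is a countable closed set, so $\overline{D}$ is countable.

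Finally I would conclude. Let $f\colon X\to\R$ be continuous. By Proposition \ref{propo:maps_ctble_cell} there is a countable $D\subseteq X$ such that $f$ is constant on each convex component of $X\setminus\overline{D}$; since $c(X)=\omega$ there are only countably many such components, so $f[X\setminus\overline{D}]$ is countable, while $f[\overline{D}]$ is countable because $\overline{D}$ is countable. Hence $f[X]$ is countable, and $X$ is a functionally countable Souslin line.

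I expect the main obstacle to be the construction step, and specifically securing the nowhere-separable reduction, which is exactly what makes the recursion work: it guarantees that the closure of each countable initial segment is nowhere dense, so that one can simultaneously keep $X$ dense in $S$ (to preserve countable cellularity, non-separability, and dense ordering) while keeping every initial segment closed (to force countable closures of countable sets). Checking that a densely ordered dense subspace of a LOTS is again a LOTS, and that density transfers cellularity from $S$ to $X$, are the routine points that must be verified with care.
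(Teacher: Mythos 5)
Your proposal is correct and follows essentially the same route as the paper's proof: reduce to a densely ordered Souslin line with no separable intervals, fix a base of size $\omega_1$ (via $c\le d\le c^+$ and $d=w$), recursively build a dense left-separated subset, and conclude functional countability from Proposition \ref{propo:maps_ctble_cell} together with the fact that left-separation makes closures of countable sets countable. The only differences are cosmetic: you construct the ``no separable interval'' reduction by hand from a connected Souslin line where the paper cites \cite[Lemma III.5.31]{kunen}, and you spell out the nowhere-dense-closure justification that the paper's recursion leaves implicit.
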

\begin{proof}
Let $X$ be a Souslin line. By \cite[Lemma III.5.31]{kunen}, we may assume that $X$ is densely ordered and contains no separable interval. By the observations we made before the statement, there is a base $\{U_\alpha\colon\alpha<\omega_1\}$ of $X$. Recursively we may choose $x_\alpha\in U_\alpha\setminus\overline{\{x_\beta\colon\beta<\alpha\}}$ for all $\alpha<\omega_1$. Then $L=\{x_\alpha\colon\alpha<\omega_1\}$ is a left-separated subset of $X$. Since $L$ is dense in $X$, $L$ is also a Souslin line.

We claim that $L$ is functionally countable. Let $f\colon L\to\R$ a be continuous function. By Proposition \ref{propo:maps_ctble_cell}, there is a countable set $D\subset L$ such that $f[L\setminus \overline{D}]$ is countable. Since $L$ is left-separated, $\overline{D}\subset\{x_\alpha\colon\alpha<\beta\}$ for some $\beta<\omega_1$. Then it follows that $f[L]\subset \{f(x_\alpha)\colon \alpha<\beta\}\cup f[L\setminus \overline{D}]$ so $f[L]$ is countable.
\end{proof}

Let $X$ be the Souslin line from Theorem \ref{thm:funct_ctble_Suslin}. If $Y$ is the Dedekind completion of $X$, $c(Y)=\omega$ and Lemma \ref{lemma:cel_w_first_ctble} implies that $Y$ is first countable. By Lemma \ref{lemma:first-countable-Dedekind-complete}, $X$ does not have ordered copies of $\omega_1$ or $\omega_1^\ast$. From this and Proposition \ref{propo:no-uncountable-set-reals} we conclude that $X$ is an Aronszajn line.

\begin{ques}
 Is there a functionally countable Aronszajn line in \ZFC{}?
\end{ques}

We comment that the functionally countable Souslin line from Example \ref{thm:funct_ctble_Suslin} answers some questions from \cite{tka_wilson-GO_fc} in the negative since it is: non-$\sigma$-scattered (Question 4.1), non-strechable (Questions 4.2 and 4.3) and hereditarily Lindelöf but uncountable (Question 4.4). Of course, since our example assumes that there is a Souslin line, these questions from \cite{tka_wilson-GO_fc} could still have a consistent positive answer.

As the closing result in this paper, we will prove that functional countability of $X$ does not imply functional countability of $X^2\setminus\Delta_X$ when $X$ is a Souslin line; see Theorem \ref{thm:Suslin-example} below. In order to construct such a counterexample $L$, we need two things. First, we need that $L$ is functionally countable.  From the proof of Theorem \ref{thm:funct_ctble_Suslin} we can extract the following fact which we will use for that.

\begin{lemma}\label{lemma:left_sep_Suslin_is_fc}
 Every densely ordered and left-separated Souslin line is functionally countable.
\end{lemma}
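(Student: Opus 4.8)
The plan is to reproduce the final paragraph of the proof of Theorem \ref{thm:funct_ctble_Suslin}, the only genuinely new ingredient being a justification that an \emph{arbitrary} densely ordered left-separated Souslin line is enumerated by $\omega_1$ (rather than by the specific construction used there). First I would note that a densely ordered Souslin line $L$ is a densely ordered LOTS with $c(L)=\omega$, so Proposition \ref{propo:maps_ctble_cell} applies with $M=\R$: given a continuous $f\colon L\to\R$, there is a countable $D\subseteq L$ such that $f[L\setminus\overline{D}]$ is countable. Since $f[L]=f[\overline{D}]\cup f[L\setminus\overline{D}]$, it then suffices to show that $\overline{D}$ is countable, as its continuous image $f[\overline{D}]$ would also be countable.

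The key step is to pin down the length of the left-separating enumeration. Write $L=\{x_\alpha:\alpha<\kappa\}$ with each initial segment $\{x_\alpha:\alpha<\lambda\}$ closed, where $\kappa=|L|$. Because $L$ is a not-separable densely ordered LOTS with $c(L)=\omega$, the inequalities $c(L)\le d(L)\le c(L)^+$ together with $d(L)=w(L)$ (both recalled before the statement of Theorem \ref{thm:funct_ctble_Suslin}) give $w(L)=d(L)=\omega_1$; fix a base $\mathcal{B}$ of $L$ with $|\mathcal{B}|=\omega_1$. For each $\beta<\kappa$ the set $L\setminus\{x_\alpha:\alpha<\beta\}$ is open and contains $x_\beta$, so I may pick $V_\beta\in\mathcal{B}$ with $x_\beta\in V_\beta$ and $V_\beta\cap\{x_\alpha:\alpha<\beta\}=\emptyset$. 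This assignment $\beta\mapsto V_\beta$ is injective: if $\beta_1<\beta_2$ and $V_{\beta_1}=V_{\beta_2}$, then $x_{\beta_1}\in V_{\beta_2}\subseteq L\setminus\{x_\alpha:\alpha<\beta_2\}$, contradicting $x_{\beta_1}\in\{x_\alpha:\alpha<\beta_2\}$. Hence $\kappa\le|\mathcal{B}|=\omega_1$, and since $L$ is uncountable, $\kappa=\omega_1$.

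With the enumeration now of length $\omega_1$, the conclusion follows exactly as in Theorem \ref{thm:funct_ctble_Suslin}. Every element of the countable set $D$ has an index below $\omega_1$, and since $\mathrm{cf}(\omega_1)>\omega$ there is some $\beta<\omega_1$ with $D\subseteq\{x_\alpha:\alpha<\beta\}$. This initial segment is closed, so $\overline{D}\subseteq\{x_\alpha:\alpha<\beta\}$, and being a proper initial segment of an $\omega_1$-enumeration it is countable. Therefore $\overline{D}$ is countable, $f[\overline{D}]$ is countable, and $f[L]$ is countable, as required.

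The step I expect to be the crux is the cardinality bound $\kappa=\omega_1$: a priori a Souslin line is only guaranteed to be hereditarily Lindelöf, so its cardinality could be as large as $2^{\omega}$, and the left-separating enumeration is not handed to us in length $\omega_1$. The resolution above is that left-separation lets one inject the enumeration into a base, and densely ordered plus ccc forces the weight to be exactly $\omega_1$; this is what simultaneously guarantees both that the proper initial segments are countable and (via $\mathrm{cf}(\omega_1)>\omega$) that the countable set $D$ is captured by one of them. The remaining verifications — that $L$ is a LOTS to which Proposition \ref{propo:maps_ctble_cell} applies, and that the indices of $D$ are bounded — are routine.
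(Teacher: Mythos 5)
Your proof is correct and takes essentially the same route as the paper, which obtains the lemma by extracting the last paragraph of the proof of Theorem \ref{thm:funct_ctble_Suslin}: Proposition \ref{propo:maps_ctble_cell} yields a countable $D$ with $f[L\setminus\overline{D}]$ countable, and left-separation traps $\overline{D}$ inside a countable closed initial segment. Your extra step showing that an arbitrary densely ordered left-separated Souslin line has a left-separating enumeration of length exactly $\omega_1$ (via $d(L)=w(L)=\omega_1$ and the injection $\beta\mapsto V_\beta$ into a base of size $\omega_1$) correctly supplies a detail the paper leaves implicit, since in Theorem \ref{thm:funct_ctble_Suslin} the enumeration has length $\omega_1$ by construction.
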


The second thing we need for our counterexample $L$ is that $L^2\setminus\Delta_L$ is not functionally countable. We will again use Tkachuk's observation (\cite[Proposition 3.1(c)]{tka-corson_funct_ctble}) that it is sufficient to construct an uncountable discrete family of open subsets in $L^2\setminus\Delta_L$. It is well known that if $X$ is a Souslin line, then there is an uncountable cellular family in $X^2$; such a family can be found in \cite[Lemma 4.3]{kunen-1980}. In fact, we will use a similar family in our proof.

Recall that a \emph{tree} is a partially ordered set $\langle T,\sqsubset\rangle$ such that for every $x\in T$ the set $(\leftarrow,x)=\{y\in T\colon y\sqsubset x\}$ of predecesors of $x$ is well-ordered. Given $x,y\in T$ we will say that $x$ and $y$ are \emph{compatible} if there exists $z\in T$ such that $z\sqsubset x$ and $z\sqsubset y$; otherwise, they are \emph{incompatible} and we write $x\perp y$. The \emph{height} of $x\in T$ is the ordinal number isomorphic to $(\leftarrow,x)$. For an ordinal $\alpha$, $T_\alpha$ will denote all elements of $T$ of height $\alpha$. An $\omega_1$-tree is a tree such that $T_\alpha\neq\emptyset$ if $\alpha<\omega_1$ and $T_{\omega_1}=\emptyset$.

\begin{defi}\label{defi:adequate}
 Let $X$ be a linearly ordered set. If $\langle T,\sqsubset\rangle$ is a tree, we will say that a collection $\{\pair{x_t,y_t,z_t}\colon t\in T\}\subset X^3$ is \emph{$T$-adequate} if the following hold:
 \begin{enumerate}[label=(\alph*)]
  \item for all $t\in T$, $x_t<z_t<y_t$,
  \item if $s,t\in T$ are such that $s\perp t$ then $[x_s,y_s]\cap[x_t,y_t]=\emptyset$, and
  \item if $s,t\in T$ are such that $s\sqsubset t$ then $[x_t,y_t]\subset(x_s,y_s)\setminus\{z_s\}$.
 \end{enumerate}
\end{defi}

Let $X$ be a topological space and $\mathcal{S}\subset\wp(X)$. Recall that a point $p\in X$ is a \emph{limit} point of $\mathcal{S}$ if for every open set $U\subset X$ such that $p\in U$ the set $\{S\in\mathcal{S}\colon U\cap S\neq\emptyset\}$ is infinite. It is easy to see that $\mathcal{S}$ is discrete if and only if $\{\overline{S}\colon S\in\mathcal{S}\}$ is pairwise disjoint and no point of $X$ is a limit point of $\mathcal{S}$.

\begin{lemma}\label{lemma:Tadequate}
 Let $X$ be a densely ordered and first countable LOTS. Given a $T$-adequate collection $\{\pair{x_t,y_t,z_t}\colon t\in T\}\subset X^3$, where $\langle T,\sqsubset\rangle$ is an $\omega_1$-tree, define $\mathcal{U}=\{(x_t,z_t)\times(z_t,y_t)\colon t\in T\}$. Then
 \begin{enumerate}[label=(\alph*)]
  \item\label{lemma:Tadequate-cellular} the closures of elements of $\mathcal{U}$ are pairwise disjoint, and
  \item\label{lemma:Tadequate-limit} if $\langle a,b\rangle\in X^2\setminus\Delta_X$ is a limit point of $\mathcal{U}$, then $a<b$ and there is a sequence $\{t_n\colon n\in\omega\}\subset T$ such that
  \begin{enumerate}[label=(\arabic*)]
   \item\label{lemma:Tadequate-limit-a} for all $m<n<\omega$, $t_m\sqsubset t_n$,
   \item\label{lemma:Tadequate-limit-b} for all $m<\omega$, $x_{t_m}<a$ and $b<y_{t_m}$, and
   \item\label{lemma:Tadequate-limit-c} either $a=\sup\{x_{t_m}\colon m\in\omega\}$ or $b=\inf\{y_{t_m}\colon m\in\omega\}$.
  \end{enumerate}
 \end{enumerate}
\end{lemma}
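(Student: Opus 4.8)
The plan is to prove the two parts separately, using the three conditions of $T$-adequacy in complementary ways: condition (b) to separate incomparable nodes and condition (c) to separate comparable ones.

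For part \ref{lemma:Tadequate-cellular}, I would first recall that in a LOTS the closure of $(p,q)$ is contained in $[p,q]$, so $\overline{(x_t,z_t)\times(z_t,y_t)}\subseteq[x_t,z_t]\times[z_t,y_t]\subseteq[x_t,y_t]^2$. Given $s\neq t$, if $s$ and $t$ are incomparable then condition (b) gives $[x_s,y_s]\cap[x_t,y_t]=\emptyset$, so the two closures lie in disjoint squares. If instead $s\sqsubset t$, then condition (c) forces $[x_t,y_t]$ to be a connected subset of $(x_s,y_s)$ missing $z_s$, hence $[x_t,y_t]\subseteq(x_s,z_s)$ or $[x_t,y_t]\subseteq(z_s,y_s)$; in the first case the second coordinates of $\overline{U_t}$ stay below $z_s$ while those of $\overline{U_s}$ stay at or above $z_s$, and in the second case the first coordinates separate in the same way. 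Either way the closures are disjoint.

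For part \ref{lemma:Tadequate-limit}, fix a limit point $\langle a,b\rangle\notin\Delta_X$. Since $X$ is densely ordered, $\{\langle u,v\rangle:u>v\}$ is open and disjoint from $\bigcup\mathcal{U}$, so $a<b$. Using first countability of $X^2$ I would extract distinct $t_n\in T$ and points $\langle u_n,v_n\rangle\in U_{t_n}$ with $\langle u_n,v_n\rangle\to\langle a,b\rangle$. Fixing $c\in(a,b)$, for all large $n$ we get $u_n<c<v_n$, hence $c\in(x_{t_n},y_{t_n})$; because any two nodes whose intervals share $c$ are comparable (the contrapositive of condition (b)), the tail $\{t_n:n\ge N\}$ is a chain. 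A chain in a tree is well-ordered, and since the indices are distinct and unbounded I can thin it to an increasing chain $s_0\sqsubset s_1\sqsubset\cdots$ whose associated indices tend to infinity, so that the corresponding witnesses $\langle \hat u_k,\hat v_k\rangle$ still converge to $\langle a,b\rangle$. This chain already satisfies \ref{lemma:Tadequate-limit-a}, and the nesting from condition (c) makes $x_{s_k}$ strictly increase and $y_{s_k}$ strictly decrease, which together with $x_{s_k}<\hat u_k\to a$ and $\hat v_k\to b<y_{s_k}$ yields \ref{lemma:Tadequate-limit-b}.

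The heart of the argument, and the step I expect to be the main obstacle, is \ref{lemma:Tadequate-limit-c}. Here I would use the avoidance clause of condition (c): for each $k$ we have $z_{s_k}\notin[x_{s_{k+1}},y_{s_{k+1}}]$, so every step is of one of two types, namely $z_{s_k}<x_{s_{k+1}}$ (type L) or $z_{s_k}>y_{s_{k+1}}$ (type R), and by pigeonhole one type occurs infinitely often. If type L occurs infinitely often, then for those $k$ one has $\hat u_k<z_{s_k}<x_{s_{k+1}}$; since $\hat u_k\to a$, for every $d<a$ eventually $x_{s_{k+1}}>\hat u_k>d$, which forces $\sup_k x_{s_k}=a$. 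Symmetrically, infinitely many type-R steps give $\inf_k y_{s_k}=b$, via $\hat v_k>z_{s_k}>y_{s_{k+1}}$ and $\hat v_k\to b$. This delivers the dichotomy of \ref{lemma:Tadequate-limit-c}. The delicate points to get right are the extraction of an increasing chain along which the witnessing points still converge to $\langle a,b\rangle$ (so that $\hat u_k\to a$ and $\hat v_k\to b$ are legitimately available), and the careful handling of the convergence inequalities in the order topology.
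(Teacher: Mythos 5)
Your proposal is correct, but it takes a genuinely different route from the paper's. The paper proves part \ref{lemma:Tadequate-limit} by contraposition: assuming no chain with properties \ref{lemma:Tadequate-limit-a}--\ref{lemma:Tadequate-limit-c} exists, it explicitly constructs a neighborhood of $\pair{a,b}$ meeting at most finitely many members of $\mathcal{U}$, organizing the argument around the set $P=\{t\in T\colon x_t\leq a\wedge b\leq y_t\}$ (shown to be an initial chain of $T$), an exceptional set $Q$ with $\lvert Q\rvert\leq 1$, and a case split on whether $P\setminus Q$ has a $\sqsubset$-maximum; in the unbounded case it passes to the Dedekind completion to form $u=\sup\{x_t\}$, $v=\inf\{y_t\}$ and extract a cofinal sequence. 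You instead prove the implication directly: first countability of $X^2$ yields witnesses from infinitely many distinct members of $\mathcal{U}$ converging to $\pair{a,b}$; condition (b) of adequacy (read, exactly as the paper itself reads it when concluding $P$ is a chain, as ``incomparable nodes have disjoint intervals'') forces a tail of the indexing nodes to be a chain; well-orderedness of chains in trees plus distinctness of the indices lets you thin to a strictly $\sqsubset$-increasing $\omega$-chain whose witnesses still converge; and the pigeonhole on whether $[x_{s_{k+1}},y_{s_{k+1}}]$ falls below or above $z_{s_k}$ delivers the dichotomy \ref{lemma:Tadequate-limit-c}. The delicate points you flagged do go through: an injective reindexing of a convergent sequence still converges, and the strict inequalities in \ref{lemma:Tadequate-limit-b} follow from $a\geq x_{s_{k+1}}>x_{s_k}$ (using monotonicity from condition (c) together with convergence). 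A notable payoff of your route is that it stays entirely inside $X$ and uses only the stated hypotheses, whereas the paper's Case 2 invokes $c(Y)=\omega$ for the Dedekind completion $Y$ (via Lemma \ref{lemma:cel_w_first_ctble}); that cellularity assumption is not among the lemma's hypotheses and does not follow from ``densely ordered and first countable,'' though it holds in the intended application to Souslin lines. What the paper's contrapositive buys in exchange is an explicit separating open set around any non-limit point, which is closer in spirit to how the lemma is used in Theorem \ref{thm:Suslin-example}. Two small remarks: you give a complete proof of part \ref{lemma:Tadequate-cellular}, which the paper dismisses as clear, and your appeal to $[x_t,y_t]$ being ``connected'' should really be to its convexity (intervals in a LOTS need not be topologically connected), but the argument is unaffected.
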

\begin{proof}
Condition \ref{lemma:Tadequate-cellular} should be clear, we only check \ref{lemma:Tadequate-limit}. Let $\langle a,b\rangle\in X^2\setminus\Delta_X$. It is immediate that if $b<a$ then $\langle a,b\rangle$ is not a limit point of $\mathcal{U}$. Then we assume that $a<b$ holds and there is no sequence $\{t_n\colon n\in\omega\}\subset T$ with properties \ref{lemma:Tadequate-limit-a}, \ref{lemma:Tadequate-limit-b} and \ref{lemma:Tadequate-limit-c}; we will prove that $\langle a,b\rangle$ is not a limit point of $\mathcal{U}$.

First, consider
$$
P=\{t\in T\colon (x_t\leq a)\wedge (b\leq y_t)\}.
$$ 

\begin{claim*}
 There exists an open set $V\subset X^2$ such that $\langle a,b\rangle\in V$ and $V\cap [(x_t,z_t)\times(z_t,y_t)]=\emptyset$ for every $t\in T\setminus P$.
\end{claim*}

To prove the claim, we choose $c\in(a,b)$ according to the situation we are in. If $[x_t,y_t]\cap (a,b)=\emptyset$ for all $t\in T\setminus P$, let $c\in(a,b)$ be arbitrary. Otherwise, let $\beta<\omega_1$ be the minimal ordinal such that there is $s\in T_\beta\setminus P$ with $[x_s,y_s]\cap(a,b)\neq\emptyset$. If $a<y_s<b$ let $c=y_s$; otherwise it will necessarily follow that $a<x_s<b$ and we choose $c=x_s$. Define $V=(\leftarrow,c)\times(c,\rightarrow)$. Then $V$ is an open set of $X^2$ with $\langle a,b\rangle\in V$. It is not hard to conclude that $V$ satisfies the claim. \vskip10pt

By the claim we may restrict our attention to the case when $P\neq\emptyset$. By the definition of $T$-adequate, $P$ is necessarily a $\sqsubset$-chain of comparable elements of $T$ and in fact, $P$ is an initial segment of $T$. Notice that the function $t\mapsto x_t$ for $t\in P$ is increasing; likewise the function $t\mapsto y_t$ for $t\in P$ is decreasing.

Next, define $Q=\{t\in P\colon a=x_t\vee b=y_t\}$. Notice that by the definition of $T$-adequate we now that if $t,t'\in T$ and either $x_t=x_{t'}$ or $y_t=y_{t'}$, then $t=t'$. This implies that $\lvert Q\rvert\leq 1$. Moreover, if $q\in Q$ then $q$ is necessarily the $\sqsubset$-maximum of $P$. Thus, it is sufficient to restrict our attention to the case when $P\setminus Q$ is nonempty.

\setcounter{case}{0}
\begin{case}
 There is $p\in P\setminus Q$ that is the $\sqsubset$-maximum of $P\setminus Q$.
\end{case}

In this case, $x_t< x_p<a<b<y_p<y_t$ for all $t\in P\setminus (Q\cup\{p\})$. Choose $c\in(x_p,a)$ and $d\in(b,y_p)$, and define $W=(c,d)\times (c,d)$. Then $W$ is an open set in $X^2$ with $\langle a,b\rangle\in W$. 

Let $t\in P\setminus (Q\cup\{p\})$. By the definition of $T$-adequate we conclude that either $z_t<x_p$ or $y_p<z_t$. If $z_t<x_p$ then $(x_t,z_t)\cap (c,d)=\emptyset$. If $y_p<z_t$, then $(c,d)\cap(z_t,y_t)=\emptyset$. In any of the two cases, $W\cap[(x_t,z_t)\times(z_t,y_t)]=\emptyset$.

By taking $V$ as in the claim, $V\cap W$ is an open set containing $\langle a,b\rangle$ that is disjoint from all but at most two elements of $\mathcal{U}$. Thus, $\langle a,b\rangle$ is not a limit point of $\mathcal{U}$.

\begin{case}
 $P\setminus Q$ does not have $\sqsubset$-maximum.
\end{case}

Let $A=\{x_t\colon t\in P\setminus Q\}$ and $B=\{y_t\colon t\in P\setminus Q\}$.  By considering the Dedekind completion $Y$ of $X$, we may take $\sup A=u$ and $\inf B=v$ in $Y$. Since $c(Y)=\omega$, we know from Lemma \ref{lemma:cel_w_first_ctble} that $Y$ is first countable. Thus, there is a sequence $\{t_n\colon n\in\omega\}\subset P\setminus Q$ with properties \ref{lemma:Tadequate-limit-a}, \ref{lemma:Tadequate-limit-b}, $u=\sup\{x_{t_n}\colon n\in\omega\}$
and $v=\inf\{y_{t_n}\colon n\in\omega\}$. Notice that $a$ is a upper bound of $A$ and $b$ is an lower bound of $B$. By our assumtion, we know that $u<a$ and $b<v$.

So let $c\in(u,a)\cap X$ and $d\in(b,v)\cap X$. Here we may forget about $Y$ and continue working inside $X$ once again. We conclude that $\{t_n\colon n\in\omega\}$ is cofinal in $P\setminus Q$ and for all $t\in P\setminus Q$, $x_t<c<a$ and $b<d<y_t$.

Let $W=(c,d)\times (c,d)$. Clearly, $W$ is an open set in $X^2$ with $\langle a,b\rangle\in W$. Let $t\in P\setminus Q$. Choose $t'\in P\setminus Q$ such that $t\sqsubset t'$; this is possible by our assumption. By the definition of $T$-adequate, we know that either $x_t<z_t<x_{t'}$ or $y_{t'}<z_t<y_t$. If $x_t<z_t<x_{t'}$ then $(x_t,z_t)\cap(c,d)=\emptyset$. If $y_{t'}<z_t<y_t$ then $(c,d)\cap (z_t,y_t)=\emptyset$. Therefore $W\cap[(x_t,z_t)\times(z_t,y_t)]=\emptyset$. 

As in the previous case, we may take $V$ as in the claim above and we conclude that $V\cap W$ is an open set containing $\langle a,b\rangle$ that is disjoint from all but at most one element of $\mathcal{U}$. Thus, $\langle a,b\rangle$ is not a limit point of $\mathcal{U}$.
\end{proof}

For the proof of the next result, we will use the tree $\omega^{<\omega_1}=\bigcup\{\omega^\alpha\colon \alpha<\omega_1\}$ of all well-ordered natural number sequences of countable length. For $s, t\in\omega^{<\omega_1}$, we define $s\sqsubset t$ if $t$ extends $s$ as a function; this is usually called the end-extension order. Given $n\in\omega$, $\langle n\rangle$ will denote the sequence of length $1$ with value $n$. If $s\in \omega^{\alpha}$ for some $\alpha<\omega_1$ and $i\in\omega$, $s^\frown i$ denotes the one element of $\omega^{\alpha+1}$ such that $s\sqsubset s^\frown i$ and $(s^\frown i)(\alpha)=i$.

\begin{thm}\label{thm:Suslin-example}
Assume that there exists a Souslin line. Then there exists a Souslin line $L$ such that $L$ is functionally countable but $L^2\setminus\Delta_L$ is not functionally countable.
\end{thm}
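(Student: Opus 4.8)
The plan is to construct, assuming a Souslin line exists, a single densely ordered, left-separated Souslin line $L$ together with an uncountable family of ``boxes'' lying below the diagonal that is discrete in $L^2\setminus\Delta_L$. Functional countability of $L$ is then immediate from Lemma \ref{lemma:left_sep_Suslin_is_fc}, while the discrete family witnesses that $L^2\setminus\Delta_L$ fails the DCCC and hence, by \cite[Proposition 3.1(c)]{tka-corson_funct_ctble}, is not functionally countable. The boxes will come from a $T$-adequate collection in the sense of Definition \ref{defi:adequate}, and their discreteness will be read off from Lemma \ref{lemma:Tadequate}.

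Concretely, I would fix a Souslin line and, via \cite[Lemma III.5.31]{kunen}, assume it is densely ordered with no separable interval; let $S$ be its Dedekind completion, so that $c(S)=\omega$ and, by Lemma \ref{lemma:cel_w_first_ctble}, $S$ is first countable. I would then build, by transfinite recursion on the levels $\alpha<\omega_1$, a downward-closed subtree $T\subseteq\omega^{<\omega_1}$ with countable nonempty levels (so that $T$ is an $\omega_1$-tree of size $\omega_1$) and a $T$-adequate family $\{\pair{x_t,y_t,z_t}\colon t\in T\}$, placing all the points $x_t,y_t,z_t$ into the set $L$ that is being built at the same time. Restricting to countable levels is forced rather than cosmetic: a densely ordered left-separated Souslin line has cardinality $\omega_1$, so it cannot carry $\mathfrak{c}$-many pairwise disjoint nonempty boxes, whereas the full tree $\omega^{<\omega_1}$ would produce that many.

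Since $L$ is densely ordered, each interval $(x_t,z_t)$ and $(z_t,y_t)$ is nonempty in $L$, so $\mathcal{U}=\{(x_t,z_t)\times(z_t,y_t)\colon t\in T\}$ is an $\omega_1$-sized family of nonempty open subsets of $L^2\setminus\Delta_L$, and by Lemma \ref{lemma:Tadequate}\ref{lemma:Tadequate-cellular} its members have pairwise disjoint closures. By the remark preceding that lemma, it remains to guarantee that no point of $L^2\setminus\Delta_L$ is a limit point of $\mathcal{U}$. Here Lemma \ref{lemma:Tadequate}\ref{lemma:Tadequate-limit} does the work: a limit point $\pair{a,b}$ would produce an increasing chain $\{t_n\}\subseteq T$ with $x_{t_n}<a<b<y_{t_n}$ for all $n$ and with $a=\sup_n x_{t_n}$ or $b=\inf_n y_{t_n}$. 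Such a chain is cofinal in a single branch at some countable limit level, so $\sup_n x_{t_n}=\sup_\beta x_{t_\ast\restriction\beta}$ and $\inf_n y_{t_n}=\inf_\beta y_{t_\ast\restriction\beta}$. The key design requirement is therefore: \emph{along every branch of $T$ and every limit level, the left endpoints have no supremum in $L$ and the right endpoints have no infimum in $L$}, i.e.\ they converge to gaps of $L$. With this in force, neither $a=\sup_n x_{t_n}$ nor $b=\inf_n y_{t_n}$ can be realized by points $a,b\in L$, so $\mathcal{U}$ has no limit points and is discrete.

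The whole difficulty is thus concentrated in the limit stages of the recursion, and this is where I expect the main obstacle to lie. At a limit level one must, for every branch that is being continued, arrange the nested intervals to shrink to a \emph{nondegenerate} interval whose two endpoints are genuine gaps of $L$, while still leaving a populated subinterval of $L$ in which to place the next triple $x_{t_\ast}<z_{t_\ast}<y_{t_\ast}$; the tension between pinching the endpoints down to gaps and keeping room to continue the tree to height $\omega_1$ is the crux. Simultaneously one must keep $L$ densely ordered and topologically dense in $S$ (so that $L$ inherits the countable chain condition and non-separability, i.e.\ remains a Souslin line) and left-separated (choosing the freshly added points outside the closure of the previously added ones, exactly as in the proof of Theorem \ref{thm:funct_ctble_Suslin}). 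The countable-levels discipline is what keeps this bookkeeping feasible: at each stage $\alpha<\omega_1$ only countably many points and countably many gaps have been committed, so there is always room in the relevant uncountable open subsets of $S$ both to steer the endpoint sequences into fresh gaps and to add new points of $L$ avoiding every gap committed so far.
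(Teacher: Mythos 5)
Your architecture is the same as the paper's (a densely ordered, left-separated Souslin subline $L$, a $T$-adequate family on an $\omega_1$-tree with countable levels, Lemma \ref{lemma:Tadequate} to locate limit points of the box family $\mathcal{U}$, then Lemma \ref{lemma:left_sep_Suslin_is_fc} and the DCCC criterion), but the proposal stops exactly where the proof has to be done: you name the limit stages as ``the crux'' and then dispose of them with a cardinality count, and that count does not suffice. The first concrete gap is tree survival. At a countable limit level $\lambda$ one can only extend the tree through branches whose nested intervals intersect in a \emph{nondegenerate} interval, and you must show at least one such branch exists (else the construction dies at height $\lambda<\omega_1$ and $\mathcal{U}$ is countable). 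This is not implied by ``only countably many points have been committed, so there is room'': the untouched part of the line could lie entirely outside every current branch while each branch pinches to a single point. The paper forestalls this with an invariant you never formulate: at every level the closed intervals are chosen as a \emph{maximal} pairwise disjoint family, so that $\bigcup\{[x_t,y_t]\colon t\in T_\alpha\}$ is dense in the Dedekind completion $Y$ (its condition $(\ast)_\alpha$). Consequently every convex component of $Y\setminus\overline{D_\lambda}$ (where $D_\lambda$ is the countable, hence non-dense, set of committed points) is trapped inside a unique interval of every level below $\lambda$, hence inside a unique branch, which therefore has nondegenerate intersection; ccc then makes the set of such branches countable, so the levels stay countable and nonempty.

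The second gap is that the ``steering into gaps'' mechanism is not coherent as stated. The endpoints $a_t,b_t$ of a branch's intersection interval $I_t$ are already determined by choices made at earlier (successor) stages; nothing can be steered at the limit stage, one can only \emph{refrain from putting those two points into} $L$. Your bookkeeping clause ``add new points of $L$ avoiding every gap committed so far'' protects only against past gaps: a point placed in $L$ at stage $\alpha$ may later turn out to equal a branch endpoint at some limit stage $\lambda>\alpha$, and then it is a limit point of $\mathcal{U}$ lying in $L^2\setminus\Delta_L$, destroying discreteness. What is actually needed, and what the paper does, is to define $L$ at the very end as $X$ minus \emph{all} endpoints $a_t,b_t$ over all limit levels, and then to verify two things: that no committed point $x_t,y_t,z_t$ ever coincides with such an endpoint (the paper's Claim 2(iii), a genuine case analysis resting on the strict inclusions $[x_t,y_t]\subset(x_s,y_s)\setminus\{z_s\}$ of Definition \ref{defi:adequate} and on placing the new intervals at a limit stage inside the open interval $(a_t,b_t)$), and that the committed points alone are already dense in $Y$ (Claim 2(iv), again via $(\ast)_\alpha$), so that the removal leaves a dense -- hence ccc, non-separable, left-separated -- subline. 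These two verifications are the actual content of the proof; your proposal acknowledges the difficulty they resolve but does not supply them.
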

\begin{proof}
Start with a Souslin line $X$. By \cite[Lemma III.5.31]{kunen}, we may assume that $X$ is densely ordered. By repeating the procedure described in Theorem \ref{thm:funct_ctble_Suslin}, we may assume that $X$ is left-separated. Since we would like to be able to calculate suprema and infima, we assume that $X$ is contained in its Dedekind completion $Y$. Thus, all intervals under discussion will be assumed to be subsets of $Y$.

We will choose a subset $L\subset X$ that is dense in $X$. Assume for a moment that we have already defined $L$. Then $L$ is densely ordered and $L$ is a LOTS with the order induced from $X$. In fact, $L$ is a Souslin line. Also, since $X$ is left-separated, $L$ is left-separated. By Lemma \ref{lemma:left_sep_Suslin_is_fc}, $L$ is a functionally countable Souslin line.

Now, we must explain how to construct $L$. We will recursively define a tree $\pair{T,\sqsubset}$ and a $T$-adequate collection $\{\langle x_t,y_t,z_t\rangle\colon t\in T\}\subset X^3$. By Lemma \ref{lemma:Tadequate}, we know the limit points of $\mathcal{U}=\{(x_t,z_t)\times(z_t,y_t)\colon t\in T\}$; we shall define $L$ to be equal to $X$ minus these limit points.

The tree $T$ will be an $\omega_1$-tree where every node has countably many immediate successors and such that all levels of $T$ are countable. In order to simplify the proof, $T$ will be a subset of $\omega^{<\omega_1}=\bigcup\{\omega^{\alpha}\colon \alpha<\omega_1\}$, the order $\sqsubset$ in $T$ will be given by end-extension and $T_\alpha\subset \omega^{\alpha+1}$ for every $\alpha<\omega_1$.

We proceed with the construction of $T$ by recursion and along with every $t\in T$, we choose $x_t,y_t,z_t\in X$. First, let $\mathcal{J}_\emptyset$ be a maximal pairwise disjoint collection of non-degenerate closed intervals with endpoints in $X\setminus\{\inf X,\sup X\}$. Since $c(X)=\omega$, $\mathcal{J}_\emptyset$ is countable. Moreover, since $[\inf X,\sup X]\notin\mathcal{J}_\emptyset$ and all elements of $\mathcal{J}_\emptyset$ have different endpoints, it easily follows that $\mathcal{J}_\emptyset$ is necessarily infinite. We then enumerate $\mathcal{J}_\emptyset=\{[x_{\langle n\rangle},y_{\langle n\rangle}]\colon n\in\omega\}$. For every $n\in\omega$, we choose $z_{\langle n\rangle}\in(x_{\langle n\rangle},y_{\langle n\rangle})\cap X$ arbitrarily. We also define $T_{0}=\{{\langle n\rangle}\colon n<\omega\}$.

Assume that $\lambda<\omega_1$ and we have chosen $\{T_\alpha\colon \alpha<\lambda\}$ and $\{x_t,y_t,z_t\colon t\in T_\alpha,\alpha<\lambda\}$ in such a way the the conditions in Definition \ref{defi:adequate} are satisfied for all $s,t\in\bigcup\{T_\alpha\colon \alpha<\lambda\}$. We also require the following condition in our recursion:

\begin{enumerate}
 \item[$(\ast)_\alpha$] For $\alpha<\omega_1$, $\bigcup\{[x_t,y_t]\colon t\in T_\alpha\}$ is dense in $Y$.
\end{enumerate}

We start by assuming that $\lambda=\gamma+1$ for some $\gamma<\omega_1$. Given $t\in T_\gamma$, let $\mathcal{J}_t$ be a maximal pairwise disjoint collection of closed intervals with endpoints in $X$, all contained in either $(x_t,z_t)$ or $(z_t,y_t)$. Then $\mathcal{J}_t$ is countable and infinite. So enumerate $\mathcal{J}_t=\{[x_{t^\frown n},y_{t^\frown n}]\colon n\in\omega\}$ and for each $n\in\omega$ choose $z_{t^\frown n}\in(x_{t^\frown n},y_{t^\frown n})\cap X$ arbitrarily. We also define $T_{\gamma+1}=\{t^\frown n\colon t\in T_\gamma,n<\omega\}$. It should be clear that if $(\ast)_\gamma$ holds, then $(\ast)_\lambda$ holds as well.

Now, assume that $\lambda$ is a limit ordinal. Consider the collection $\mathcal{B}_\lambda$ of all $t\in\omega^\lambda$ such that $t\restriction{(\alpha+1)}\in T_\alpha$ for all $\alpha<\lambda$; that is, the set of all branches through the subtree that has been defined so far. Notice that for $t\in\mathcal{B}_\lambda$, $\mathcal{I}_t=\{[x_{t\restriction(\alpha+1)},y_{t\restriction(\alpha+1)}]\colon \alpha<\lambda\}$ is a decreasing collection of closed, non-degenerate intervals with endpoints in $X$. Then $I_t=\bigcap\mathcal{I}_t$ is a convex subset of $Y$. We define $\mathcal{C}_\lambda=\{t\in\mathcal{B}_\lambda\colon \lvert I_t\rvert>1\}$. 

Let $a_t=\inf I_t$ and $b_t=\sup I_t$ for every $t\in\mathcal{C}_\lambda$, so that $I_t=[a_t,b_t]$. Let $D_\lambda$ be the set of all $x_t,y_t,z_t$ such that $t\in\bigcup\{T_\alpha\colon\alpha<\lambda\}$. Since $D_\lambda$ is a countable set, $D_\lambda$ is not dense in $X$ or $Y$.

\begin{claim}
 The set of convex components of $Y\setminus\overline{D_\lambda}$ coincides with the collection of the sets of the form $(a_t,b_t)$ for $t\in\mathcal{C}_\lambda$.
\end{claim}

We prove Claim 1. First, let $U$ be a convex component of $Y\setminus\overline{D_\lambda}$. Given $\alpha<\lambda$, from property $(\ast)_\alpha$ we can conclude that there must exist $s_\alpha\in T_\alpha$ such that $U\subset [x_{s_\alpha},y_{s_\alpha}]$; such $s_\alpha$ is unique (among elements of $T_\alpha$) because of property (b) from Definition \ref{defi:adequate}. From property (c) from Definition \ref{defi:adequate} we conclude that $\{s_\alpha\colon\alpha<\lambda\}$ is a $\sqsubset$-chain so we may take $s=\bigcup\{s_\alpha\colon\alpha<\lambda\}$. Clearly, $s\in \mathcal{B}_\lambda$ and $U\subset I_s$. Since $U$ is open in the densely ordered space $Y$, in fact $s\in \mathcal{C}_\lambda$. Notice also that since $U$ is open, $U\subset(a_s,b_s)$.

Now, let $s\in \mathcal{C}_\lambda$, we must prove that $(a_s,b_s)$ is a convex component of $Y\setminus\overline{D_\lambda}$. To prove this, it is sufficient to prove that $(a_s,b_s)\cap D_\lambda=\emptyset$ and that $a_s,b_s\in \overline{D_\lambda}$. 

Given $\alpha<\lambda$, since $I_s\subset[x_{s\restriction{\alpha+2}},y_{s\restriction{\alpha+2}}]$, by property (c) in Definition \ref{defi:adequate}, $I_s \subset(x_{s\restriction{\alpha+1}},y_{s\restriction{\alpha+1}})\setminus\{z_{s\restriction{\alpha+1}}\}$. In particular notice that, $x_{s\restriction{\alpha+1}},y_{s\restriction{\alpha+1}},z_{s\restriction{\alpha+1}}\notin I_s$. Further, if $t\in T_\alpha\setminus\{s\restriction{\alpha+1}\}$, property (b) in Definition \ref{defi:adequate} implies $x_t,y_t,z_t\notin I_s$. Thus, $(a_s,b_s)\cap D_\lambda=\emptyset$.

Now, notice that by the definition of $I_s$, $\{x_{s\restriction{\alpha+1}}\colon\alpha<\lambda\}$ is a strictly increasing sequence, $\{y_{s\restriction{\alpha+1}}\colon\alpha<\lambda\}$ is a strictly decreasing sequence, $a_s=\sup\{x_{s\restriction{\alpha+1}}\colon\alpha<\lambda\}$ and $b_s=\inf\{y_{s\restriction{\alpha+1}}\colon\alpha<\lambda\}$. This shows that $a_s,b_s\in \overline{D_\lambda}$ and, as discussed above, completes the proof of Claim 1.\vskip10pt

From Claim 1, $Y\setminus\overline{D_\lambda}\neq\emptyset$, and $c(Y)=\omega$, we conclude that $\mathcal{C}_\lambda$ is nonempty and countable. Define $T_\lambda=\{t^\frown n\colon t\in\mathcal{C}_\lambda, n\in\omega\}$; this is a countable infinite set. Finally, for each $t\in\mathcal{C}_\lambda$, let $\mathcal{J}_t$ be a maximal pairwise disjoint collection of closed intervals with endpoints in $X$, all contained in $(a_t,b_t)$. Again it can be easily checked that $\mathcal{J}_t$ is countable infinite and we can enumerate $\mathcal{J}_t=\{[x_{t^\frown n},y_{t^\frown n}]\colon n\in\omega\}$. Also, for $t\in\mathcal{C}_\lambda$ and $n\in\omega$ choose $z_{t^\frown n}\in(x_{t^\frown n},y_{t^\frown n})\cap X$ arbitrarily. It should be clear that the inductive hypotheses hold in this step too, including $(\ast)_\lambda$.

This concludes the recursion. We define
$$
L=X\setminus\{a_t,b_t\colon \exists \lambda<\omega_1\textrm{ such that }\lambda\textrm{ is a limit and }t\in\mathcal{C}_\lambda\}
$$

\begin{claim}
The following properties hold:
\begin{enumerate}[label=(\roman*)]
 \item $T$ is an $\omega_1$-tree with countable levels,
 \item $\{\langle x_t,y_t,z_t\rangle\colon t\in T\}$ is $T$-adequate,
 \item for each $t\in T$, $x_t,y_t,z_t\in L$, and
 \item $\{x_t,y_t,z_t\colon t\in T\}$ is dense in $Y$.
\end{enumerate}
\end{claim}

Items (i) and (ii) should be clear from the construction. Let us next prove (iii). Notice that by construction $\{x_t,y_t,z_t\colon t\in T\}\subset X$. So let $\lambda<\omega_1$ be a limit and $s\in\omega^\lambda$, it is sufficient to prove that $\{a_s,b_s\}\cap\{x_t,y_t,z_t\colon t\in T\}=\emptyset$. 

If $\alpha<\lambda$, $\{a_s,b_s\}\subset [x_{s\restriction{\alpha+2}},y_{s\restriction{\alpha+2}}]$ by the definition of $I_s$. By property (c) of Definition \ref{defi:adequate}, we conclude that $\{a_s,b_s\}\subset(x_{s\restriction{\alpha+1}},y_{s\restriction{\alpha+1}})\setminus\{z_{s\restriction{\alpha+1}}\}$. This proves that $\{a_s,b_s\}\cap\{x_t,y_t,z_t\}=\emptyset$ for all $t\in T$ with $t\sqsubset s$.

If $t\in T$ and $s\sqsubset t$, then since the domain of $t$ is necessarily a successor ordinal, $s^\frown{i}\sqsubset t$ for some $i\in\omega$. By property (c) of Definition \ref{defi:adequate}, it follows that $x_t,y_t,z_t\subset(x_{s^\frown{i}},y_{s^\frown{i}})$. But $(x_{s^\frown{i}},y_{s^\frown{i}})\subset (a_s,b_s)$ by our construction, so  $\{a_s,b_s\}\cap\{x_t,y_t,z_t\}=\emptyset$.

Finally, let $t\in T$ be such that $s\perp t$, and let $\alpha<\omega_1$ with $t\in T_\alpha$. If $\alpha<\lambda$, we know that $s\restriction({\alpha+1}), t\in T_\alpha$ and $s\restriction({\alpha+1})\perp t$. Since $\{a_s,b_s\}\subset [x_{s\restriction{\alpha+1}},y_{s\restriction{\alpha+1}}]$, by properties (a) and (b) of Definition \ref{defi:adequate} we conclude that $\{a_s,b_s\}\cap\{x_t,y_t,z_t\}=\emptyset$. If $\lambda\leq\alpha$, then $s,t\restriction\lambda\in \omega^\lambda$ and $s\neq t\restriction\lambda$. Let $\beta=\min\{\gamma<\lambda\colon s(\gamma)\neq t(\gamma)\}$. Since $\{a_s,b_s\}\subset[x_{s\restriction{\beta+1}},y_{s\restriction{\beta+1}}]$, $\{x_t,y_t,z_t\}\subset[x_{t\restriction{\beta+1}},y_{t\restriction{\beta+1}}]$ and $s\restriction{(\beta+1)}\perp t\restriction{(\beta+1)}$, by property (b) of Definition \ref{defi:adequate}, $\{a_s,b_s\}\cap\{x_t,y_t,z_t\}=\emptyset$.

Now, we prove statement (iv). Let $w\in Y$ and assume that $w\notin\overline{\{x_t,y_t,z_t\colon t\in T\}}$. Given any $\alpha<\omega_1$, by property $(\ast)_\alpha$ and property (b) of Definition \ref{defi:adequate}, there exists a unique element $s_\alpha\in T_\alpha$ such that $w\in(x_{s_\alpha},y_{s_{\alpha}})$. By properties (b) and (c) of Definition \ref{defi:adequate} it follows that $s_\alpha\sqsubset s_\beta$ whenever $\alpha<\beta<\omega_1$. But then $\{x_{s_\alpha}\colon\alpha<\omega_1\}$ is an ordered copy of $\omega_1$ in $X$, this is impossible by Lemma \ref{lemma:cel_w_first_ctble} since $c(X)=\omega$.

This concludes the proof of all items in Claim 2.\vskip10pt

We are ready to prove that $L$ has the desired properties. First, by properties (iii) and (iv) we conclude that $L$ is a subset of $X$ that is dense in $Y$. As discussed in the begining of the proof, this implies that $L$ is a functionally countable Souslin line.

Next, for each $t\in T$ let $U_t=(x_t,z_t)\times(z_t,y_t)$ and consider the set $\mathcal{U}=\{U_t\colon t\in T\}$. Notice that $\mathcal{U}$ is a collection of open subsets of $Y\times Y$. Further, $U_t\cap\Delta_Y=\emptyset$ for every $t\in T$. Since $L$ is dense in $Y$, $U_t\cap L^2$ is a nonempty open subset of $L^2\setminus\Delta_L$ for every $t\in T$. 

Let $\mathcal{V}=\{U_t\cap L^2\colon t\in T\}$, we claim that this is a discrete uncountable family of open sets. That $\mathcal{V}$ is uncountable follows from the fact that $T$ is an $\omega_1$-tree with countable levels. By (a) in Lemma \ref{lemma:Tadequate} it follows that $\mathcal{U}$ has pairwise disjoint closures in $Y^2$, so $\mathcal{V}$ also has pairwise disjoint closures in $L^2\setminus\Delta_L$. Notice that, in order to prove that $\mathcal{V}$ is discrete, it is sufficient to prove that all limit points of $\mathcal{U}$ are in $Y^2\setminus L^2$.

Assume that $\pair{a,b}$ is a limit point of $\mathcal{U}$. According to (b) in Lemma \ref{lemma:Tadequate}, $a<b$ and there is a $\sqsubset$-decreasing sequence $\{s_n\colon n<\omega\}\subset T$ such that $x_{s_n}<a$ and $b<y_{s_n}$ for all $n<\omega$, $a=\sup\{x_{s_n}\colon n<\omega\}$ or $b=\inf\{y_{s_n}\colon n<\omega\}$. For each $n<\omega$ let $\alpha_n<\omega_1$ be such that $s_n\in T_{\alpha_n}$. Notice that $\alpha_m<\alpha_n$ if $m<n<\omega$. Define $\lambda=\sup\{\alpha_n\colon n<\omega\}\in\omega_1$ and $s=\bigcup\{s_n\colon n<\omega\}\in\omega^\lambda$. But then it easily follows that $a_s=\sup\{x_{s_n}\colon n<\omega\}$ and $b_s=\inf\{y_{s_n}\colon n<\omega\}$. Thus, either $a=a_s$ or $b=b_s$. So $\pair{a,b}\notin L^2$. 

This completes the proof that $\mathcal{V}$ is an uncountable discrete family of open nonempty subsets of $L^2\setminus\Delta_L$ and by \cite[Proposition 3.1 (c)]{tka-corson_funct_ctble}, $L^2\setminus\Delta_L$ is not functionally countable. 
\end{proof}

Notice that the tree $T$ that we constructed in the proof of Theorem \ref{thm:Suslin-example} must be a Souslin tree, as it is well known. For example, in the proof of statement (iv) we essentially proved that $T$ has no countable chains.

To finish this work, we remark the natural follow-up question remains unsolved, unfortunately.

\begin{ques}
Is it consistent that there is a Souslin line $L$ such that $(L\times L)\setminus\Delta_L$ is functionally countable?
\end{ques}

\section*{Acknowledgements}
We wish to thank the referee for their valuable suggestions which helped us improve the paper.
Research in this paper was supported by CONACyT's FORDECYT-PRONACES grant 64356/2020. Research of the first-named author was also supported by a CONACyT doctoral scholarship.

\bibliography{2022-funct_ctble_LOTS} \bibliographystyle{amsplain}
\end{document}